\newtheorem{theorem}{Theorem}[section]
\newtheorem{lemma}[theorem]{Lemma}
\newtheorem{proposition}[theorem]{Proposition}
\newtheorem{cor}[theorem]{Corollary}
\theoremstyle{definition}
\numberwithin{equation}{section}
\newcommand{\N}{\mathbb{N}}
\newcommand{\Ho}{\mathcal{H}}
\newcommand{\M}{\mathfrak{M}}
\newcommand{\A}{\mathcal{A}}
\newcommand{\Pol}{\mathcal{P}}
\DeclareMathOperator{\re}{Re}
\DeclareMathOperator{\Ext}{Ext}
\newcommand{\nn}[1]{{\left\vert\kern-0.25ex\left\vert\kern-0.25ex\left\vert #1 
		\right\vert\kern-0.25ex\right\vert\kern-0.25ex\right\vert}}
\renewcommand{\geq}{\geqslant}
\renewcommand{\leq}{\leqslant}
\newcommand{\eps}{\varepsilon}
\begin{document}


\title[Boundaries for Gelfand transform images of Banach algebras]{Boundaries for Gelfand transform images of Banach algebras of holomorphic functions}

\author[Choi]{Yun Sung Choi}
\address[Y. S. Choi]{Department of Mathematics, POSTECH, Pohang 790-784, Republic of Korea \newline
	\href{http://orcid.org/0000-0000-0000-0000}{}}
\email{\texttt{mathchoi@postech.ac.kr}}

	\author[Jung]{Mingu Jung}
	\address[Jung]{Basic Science Research Institute and Department of Mathematics, POSTECH, Pohang 790-784, Republic of Korea \newline
		\href{http://orcid.org/0000-0000-0000-0000}{ORCID: \texttt{0000-0003-2240-2855} }}
	\email{\texttt{jmingoo@postech.ac.kr}}

\thanks{The first author was supported by NRF (NRF-2019R1A2C1003857) and the second author was supported by NRF (NRF-2019R1A2C1003857) and by POSTECH Basic Science Research Institute Grant (NRF-2021R1A6A1A10042944)}

\subjclass[2020]{Primary: 46E50; Secondary: 46G20, 46J15}

\date{\today}

\keywords{Shilov boundary; Holomorphic functions; Peak points}

\begin{abstract}
Let $\A$ be a Banach algebra of bounded holomorphic functions on the open unit ball $B_X$ of a complex Banach space $X$. Considering the Gelfand transform image $\widehat{\A}$ of the Banach algebra $\A$, which is a uniform algebra on the spectrum of $\A$, we obtain an explicit description of the Shilov boundary for $\widehat{\A}$ for classical Banach spaces $X$ in the case where $\A$ is a certain Banach algebra, for instance, $\A_\infty (B_X)$, $\A_u (B_X)$ or $\A_{wu} (B_X)$. Some possible application of our result to the famous Corona theorem is also briefly discussed. 
\end{abstract}
\maketitle


\section{Introduction}

For a complex Banach space $X$, let $\mathcal{A}$ be a Banach algebra of bounded holomorphic functions on the open unit ball $B_X$ endowed with the usual supremum norm. In contrast to a classical result due to G.E.~Shilov \cite{Shilov}, the Shilov boundary for $\A$ may not exist since the closed unit ball $\overline{B}_X$ is not compact unless $X$ is finite dimensional (all the definitions will be given later in Section \ref{sec:pre}). For instance, J.~Globevnik observed in \cite{Glob2} that there is no Shilov boundary for $\A_u (B_{c_0})$ and $\A_\infty (B_{c_0})$ by showing that there exists a sequence of closed boundaries whose intersection is empty. Since then, many authors have studied the Shilov boundary and obtained various characterizations of boundaries for $\A_u (B_X)$ and $\A_\infty (B_X)$ in the case, for instance, when $X$ is $\ell_p$ with $1 \leq p \leq \infty$, $C(K)$, the predual of the Lorentz sequence space, or a Marcinkiewicz sequence space \cite{Acosta, AcoLou, AcoMor, AMR, ACLP, CGKM, CH, CHL, MR1, MR2}. 

Our intention here is to study the Shilov boundary for the Gelfand transform image $\widehat{\A}$ of $\A = \A_u (B_X)$ or $\A_\infty (B_X)$ which turns to be a uniform algebra on the spectrum (maximal ideal space) $\M(\A)$ of the Banach algebra $\A$. It is well known that if $A$ is an algebra of bounded continuous functions on a Hausdorff space $\Omega$, then $A$ is isometric to $\widehat{A} := \{ \widehat{f} : f \in A\}$, where $\widehat{f} (\phi ) = \phi (f)$ for every $\phi \in \M (A)$, which is a uniform algebra on its spectrum $\M(A)$ via the Gelfand transform (see, for instance, \cite{Lee}). 
As we consider a uniform algebra $\widehat{\A}$, there is always a minimal closed boundary $\Gamma \subset \M (\A)$ with the property that every elements $\widehat{f}$ of $\widehat{\A}$ attains its maximum absolute value at some point of $\Gamma$ due to the aforementioned result of G.E.~Shilov \cite{Shilov}. 
From this point of view, it is natural to investigate how closed boundaries for $\A$ and $\widehat{\A}$ relate to each other, and the relation between the Shilov boundaries for $\A$ and $\widehat{\A}$ when the Shilov boundary for $\A$ exists. 

The paper has the following structure. In Section \ref{section_1}, beginning by noticing some relation between those boundaries for $\A$ and $\widehat{\A}$, we obtain an explicit description of the Shilov boundary for $\widehat{\A}$ when $\A = \A_u (B_X), \A_\infty (B_X)$ or $\A_{wu} (B_X)$ for many classical Banach spaces $X$. For example, unlike the result of Globevnik on $\A_u (B_{c_0})$, we observe that the Shilov boundary for $\widehat{\A_u (B_{c_0})}$ coincides with $\{ \delta_z : z \in \mathbb{T}^\mathbb{N}\}$, where $\mathbb{T}^\mathbb{N} = \{ z = (z_n) \in \overline{B}_{\ell_\infty} : |z_n| = 1 \text{ for all } n \in \N \}$. Beside the case of $X= c_0$, the Shilov boundary for the Gelfand transform image of $\A_u (B_X)$ or $\A_\infty (B_X)$ when $X$ is locally uniformly convex Banach spaces, $\ell_1$, $d(w,1)$ (the Lorentz sequence space), locally $c$-uniformly convex order continuous sequence space, $C(K)$ with $K$ infinite countable compact Hausdorff space or $C_1 (H)$ (the space of trace class operators) is concretely obtained. 
At the end of Section \ref{section_1}, we discuss a possible application of our results on the Shilov boundary for $\widehat{\A_u (B_X)}$ to the famous Corona theorem in the context of infinite dimensional Banach spaces. In Section \ref{sec:wu}, we describe the Shilov boundary for the Gelfand transform image of $\A_{wu} (B_X)$ for a large class of Banach spaces $X$, for example, Banach spaces $X$ whose dual spaces $X^*$ are separable, or Banach spaces $X$ which have the Radon-Nikod\'ym property and their dual spaces $X^*$ have the approximation property.

\section{Preliminaries}\label{sec:pre}

Throughout the paper, all Banach spaces are assumed to be complex Banach spaces. Given a Banach space $X$, the unit sphere and the open unit ball are denoted by $S_X$ and $B_X$, respectively.
We are going to deal with the following Banach algebras of holomorphic functions: 
\begin{enumerate}
\setlength\itemsep{0.4em}
\item[(a)] $\A_\infty (B_X) = \{ f \in \Ho^\infty (B_X) : f \text{ is continuously extended to } \overline{B}_X\}$,
\item[(b)] $\A_u (B_X) = \{ f \in \Ho^\infty (B_X) : f \text{ is uniformly continuous on } B_X \}$,
\item[(c)] $\A_{wu} (B_X) = \{ f \in \Ho^\infty (B_X) : f \text{ is weakly uniformly continuous on } B_X \}$, 
\end{enumerate} 
where each of them is endowed with the supremum norm on $B_X$. 
Notice the following inclusions: 
\[
\A_{wu} (B_X) \subseteq \A_u (B_X) \subseteq \A_\infty (B_X). 
\]
It is clear that $\A_{wu} (B_X) = \A_u (B_X) = \A_\infty (B_X)$ when $X$ is a finite dimensional Banach space. On the other hand, it is known that those inclusions are strict in general when $X$ is infinite dimensional (see \cite[Section 12]{ACG91}). In addition, let us denote by $\A_a (B_X)$ the subalgebra of $\A_u (B_X)$ which consists of all approximable holomorphic functions on $B_X$. That is, $\A_a (B_X)$ is the closure in $\A_u (B_X)$ of the subalgebra generated by the constant functions and $X^*$. 

Given a Banach algebra $\A$, let us denote by $\M (\A)$ the spectrum of $\A$. That is, $\M(\A)$ is the space of all homomorphisms on $\A$ endowed with the Gelfand topology $\sigma$ (i.e., restriction of the weak-star topology). 
If we restrict ourselves to the case when $\A$ is one of the above three Banach algebras $\A_\infty (B_X)$, $\A_u (B_X)$ and $\A_{wu} (B_X)$, there is a canonical mapping $\pi : \M (\A) \rightarrow \overline{B}_{X^{**}}$ which is given by $\pi (\phi) = \phi \vert_{X^{*}}$ for every $\phi \in \M (\A)$. Moreover, if $z \in \overline{B}_{X^{**}}$, we can consider the point evaluation homomorphism $\delta_z$ in $\M(\A)$, that is, $\delta_z (f) =  \widetilde{f} (z)$ for every $f \in \A$, where $ \widetilde{f}$ is the Aron-Berner extension of $f$ \cite{AB}. 


Let $A \subset C_b (\Omega)$ be an algebra of functions on a topological space $\Omega$, where $C_b (\Omega)$ denotes the space of continuous and bounded functions on $\Omega$ endowed with the usual supremum norm on $\Omega$. Recall that a point $x \in \Omega$ is a \emph{strong boundary point for $A$} if, for each open neighborhood $U$ of $x$, there exists $f \in A$ with $f(x)=\sup_\Omega |f| = 1$ and $\sup_{y \in \Omega \setminus U} |f(y)| < 1$. 
A point $x \in \Omega$ is a \emph{peak point} for $A$ if there exists $f \in A$ such that $f(x)=1$ and $|f(y)| < 1$ for every $y \in \Omega \setminus \{x\}$. 
Moreover, a point $x \in \Omega$ is a \emph{strong peak point} for $A$ if there exists $f \in A$ such that $f(x)=1$ and for each open neighborhood $U$ of $x$, $\sup_{y \in \Omega \setminus U} |f(y)| < 1$. 
We say that a subset $\Gamma$ of $\Omega$ is a \emph{boundary} for the function algebra $A$ (in the Globevnik sense \cite{Glob1}) if 
\[
\| f \| = \sup\{ |f(x)| : x \in \Gamma \}, \quad \text{for all } f \in A. 
\]
If the intersection of all the closed boundaries for $A$ is again a boundary for $A$, then it is called the \emph{Shilov boundary} for $A$, and denoted by $\partial A$. A classical result by Shilov states that if $\Omega$ is compact and $A \subset C_b (\Omega)$ is a separating subalgebra (i.e., a uniform algebra), then the intersection of all closed boundaries is again a closed boundary, which becomes a minimal closed boundary for $A$ \cite{Shilov}. 
Note that when $\Omega$ is a compact topological space, each peak point for $A$ is a strong boundary point for $A$ and any strong boundary point for $A$ belongs to each closed boundary for $A$. 
For background information on function algebras, we refer to \cite{Dales, Stout}.

In this paper, for a Banach space $X$ and a Banach algebra $\mathcal{A}$ with $\A_a (B_X) \subseteq \A \subseteq \A_\infty (B_X)$, we shall use the following notations: 
\begin{align*}
\rho \mathcal{A} &= \{ z \in \overline{B}_X : z \text{ is a peak point for } \A \}, \\
\rho \widehat{\A} &= \{ \phi \in \M (\A) : \phi \text{ is a peak point for } \widehat{\A} \}, 
\end{align*}
where $\widehat{\A}$ denotes the image of $\A$ under the Gelfand transform, that is, $\widehat{\A} = \{ \widehat{f} : f \in \A \}$.  
The Shilov boundary for $\widehat{\A}$, which always exists due to the compactness of $\M(\A)$, will be denoted by $\partial \widehat{\A}$. Let us denote by $\partial \A$ the Shilov boundary for $\A$ only when it exists. 
In particular, when $\A = \A_{wu} (B_X), \A_u (B_X)$ or $\A_\infty (B_X)$, we will simply denote by $\rho \widehat{\A_{wu}}, \rho\widehat{\A_u}$ or $\rho\widehat{\A_{\infty}}$ (resp., $\partial \widehat{\A_{wu}}, \partial\widehat{\A_u}$ or $\partial \widehat{\A_{\infty}}$) the set of all peak points for $\widehat{\A}$ (resp., the Shilov boundary for $\widehat{\A}$) when the underlying Banach space is understood. 

\section{On the Shilov boundary for $\widehat{\A_u (B_X)}$ or $\widehat{\A_\infty (B_X)}$ }\label{section_1}

 \begin{proposition}\label{prop:general}
 Let $X$ be a Banach space and $\A = \A_u (B_X)$ or $\A_\infty (B_X)$. If $\phi$ is a strong boundary point for $\widehat{\A}$, then there exists a net $(x_\alpha)$ in $S_X$ such that $(\delta_{x_\alpha})$ converges in the Gelfand topology to $\phi$ in $\M (\A)$.  
 \end{proposition}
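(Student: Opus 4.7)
The plan is to exhibit a specific closed boundary for $\widehat{\A}$ whose elements are evaluation functionals on points of $S_X$, and then invoke the fact (recalled in the preliminaries) that in a compact setting every strong boundary point lies in every closed boundary. Since $\M(\A)$ is compact in the Gelfand topology, this will force $\phi$ to be a limit point, in the Gelfand topology, of evaluation functionals at points of $S_X$.

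Concretely, the first step is to verify that $B := \{\delta_x : x \in S_X\}$ is a boundary for $\widehat{\A}$. For any $f \in \A$ (with $\A = \A_u(B_X)$ or $\A_\infty(B_X)$), the function extends continuously to $\overline{B}_X$, and the maximum modulus principle for bounded holomorphic functions on $B_X$ yields
\[
\|f\| \;=\; \sup_{x \in \overline{B}_X} |f(x)| \;=\; \sup_{x \in S_X} |f(x)| \;=\; \sup_{x \in S_X} |\widehat{f}(\delta_x)|.
\]
Hence $B$ is a boundary for $\widehat{\A}$. The second step is to pass to its closure: let $\overline{B}$ denote the closure of $B$ in $\M(\A)$ with respect to the Gelfand topology. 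Since each $\widehat{f}$ is continuous on $\M(\A)$,
\[
\sup_{\psi \in \overline{B}} |\widehat{f}(\psi)| \;=\; \sup_{\psi \in B} |\widehat{f}(\psi)| \;=\; \|f\|,
\]
so $\overline{B}$ is a closed boundary for $\widehat{\A}$.

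For the third step, we use compactness of $\M(\A)$ together with the property recalled in Section~\ref{sec:pre}: in a compact topological space, every strong boundary point for a function algebra belongs to every closed boundary. Applying this to $\phi$ and to the closed boundary $\overline{B}$, we obtain $\phi \in \overline{B}$. By definition of the Gelfand topology, this means precisely that there is a net $(x_\alpha) \subset S_X$ with $\delta_{x_\alpha} \to \phi$, i.e. $f(x_\alpha) = \delta_{x_\alpha}(f) \to \phi(f)$ for every $f \in \A$, which is the desired conclusion.

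There is no single difficult step here; the proof is essentially an unpacking of definitions once one recognizes that the evaluations on $S_X$ give a boundary. The only mildly subtle point is the invocation of the maximum modulus principle in infinite dimensions to reduce from $\overline{B}_X$ to $S_X$, but this is standard for functions in $\A_u(B_X)$ and $\A_\infty(B_X)$. The statement would also be true with $S_X$ replaced by $\overline{B}_X$, but working with $S_X$ is slightly sharper and is what will be useful later when describing the Shilov boundary.
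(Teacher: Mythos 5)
Your proof is correct and rests on the same key fact as the paper's, namely that the maximum modulus principle applied to one-dimensional slices gives $\|f\| = \sup_{x \in S_X}|f(x)|$ for $f$ in $\A_u(B_X)$ or $\A_\infty(B_X)$ (a step you label as standard and the paper proves explicitly). The only difference is packaging: you route the conclusion through the general fact that strong boundary points lie in every closed boundary of a uniform algebra, whereas the paper unwinds that fact directly by producing, for each neighborhood $U$ of $\phi$, a point $x_0 \in S_X$ with $\delta_{x_0} \in U$ via the strong-boundary function --- the two arguments are essentially identical.
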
 
 
 \begin{proof}
 Let $U$ be an open neighborhood of $\phi$ in $\M (\A)$. It suffices to show that there exists $x_0 \in S_X$ such that $\delta_{x_0} \in U$. Since $\phi$ is a strong boundary point, there exists $f \in \A$ with $\| f \| = 1$ such that $\phi (f) = 1$ and $\sup \{|\psi (f) | : \psi \in \M(\A) \setminus U \} < 1$. Notice that $\| f \| = \sup_{x \in S_X} |f(x)|$ where $f$ is understood as its extension to $\overline{B}_X$. Indeed, for a sequence $(x_n)$ in $B_X$ such that $|\delta_{x_n} (f) | = |f(x_n)| \rightarrow 1$, let $r_n = \|x_n\| < 1$ for each $n \in \N$. Consider the function 
$g_n : r_n^{-1} \overline{\mathbb{D}} \rightarrow \mathbb{C}$ given by 
\[
g_n (\lambda) = f(\lambda x_n) \,\text{  for all  }\, \lambda \in r_n^{-1} \overline{\mathbb{D}}.
\]
Note that $g_n$ is holomorphic on $r_n^{-1} \mathbb{D}$ and (uniformly) continuous on $r_n^{-1} \overline{\mathbb{D}}$. By the Maximum Modulus Principle, there exists $\lambda_n$ with $|\lambda_n| = r_n^{-1}$ such that $|g_n(\lambda_n)| \geq |g(1)|$, that is, $|f(\lambda_n x_n)| \geq |f(x_n)|$. Letting $y_n = \lambda_n x_n$ for each $n \in \N$, we have that $(y_n) \subset S_X$ and $|f(y_n)| \rightarrow \| f \|$; hence $\| f \| = \sup_{x \in S_X} |f(x)|$. This implies that there exists $x_0 \in S_X$ such that $\delta_{x_0} \in U$. 
 \end{proof} 
 

One of direct consequences of Proposition \ref{prop:general} is that the set of strong boundary points for $\widehat{\A}$ is contained in $\overline{\{\delta_x : x \in S_X\}}^{\,\sigma}$ where $\A = \A_\infty (B_X)$ or $\A_u (B_X)$; hence the Shilov boundary $\partial \widehat{\A}$ is contained also in $\overline{\{\delta_x : x \in S_X\}}^{\,\sigma}$ since the set of strong boundary points for $\widehat{\A}$ is dense in $\partial \widehat{\A}$ \cite[Corollary 4.3.7]{Dales}. Moreover, note that if there is a subset $\Gamma \subset \overline{B}_{X^{**}}$ satisfying that $\|  \widetilde{f} \| = \sup_{z \in \Gamma} |  \widetilde{f} (z) |$, where $ \widetilde{f}$ is the Aron-Berner extension of $f$, for all $f \in \A$, then 
\[
\| \widehat{f} \| = \| f \| = \|  \widetilde{f} \| = \sup_{z \in \Gamma} |  \widetilde{f} (z) | = \sup \{ | \widehat{f} (\delta_z) | : z \in \Gamma \}. 
\]
This simple observation yields the following. 



\begin{proposition}\label{prop:boundary_norming}
Let $X$ be a Banach space and $\A = \A_u (B_X)$ or $\A_\infty (B_X)$. If $\Gamma \subset \overline{B}_{X^{**}}$ is a boundary for $\widetilde{\A} := \{  \widetilde{f} : f \in \A\}$, where $ \widetilde{f}$ is the Aron-Berner extension of $f$, then 
$\partial \widehat{\A}$ is contained in $\overline{\{\delta_z : z \in \Gamma \}}^{\,\sigma}$. 
\end{proposition}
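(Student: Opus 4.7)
The plan is to read off the conclusion from the displayed chain of equalities appearing immediately before the statement. That computation already shows that for every $f \in \A$ one has
\[
\| \widehat{f} \| = \| f \| = \| \widetilde{f} \| = \sup_{z \in \Gamma} |\widetilde{f}(z)| = \sup_{z \in \Gamma} |\widehat{f}(\delta_z)|,
\]
so the set $B := \{ \delta_z : z \in \Gamma \}$, which lives in $\M(\A)$ because each $\delta_z$ is a well-defined point evaluation homomorphism on $\A$ via the Aron--Berner extension (as recorded in Section~\ref{sec:pre}), is itself a (not necessarily closed) boundary for the uniform algebra $\widehat{\A}$ on the compact space $\M(\A)$.

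From there I would pass to the Gelfand-topology closure $\overline{B}^{\,\sigma}$. The verification that this closure is still a boundary is the one genuinely needed observation in the proof: since each $\widehat{f}$ is by construction $\sigma$-continuous on $\M(\A)$, the map $\psi \mapsto |\widehat{f}(\psi)|$ is continuous, and hence
\[
\sup_{\psi \in \overline{B}^{\,\sigma}} |\widehat{f}(\psi)| = \sup_{\psi \in B} |\widehat{f}(\psi)| = \|\widehat{f}\|
\]
for every $f \in \A$. Thus $\overline{B}^{\,\sigma}$ is a \emph{closed} boundary for $\widehat{\A}$.

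Finally, I would invoke Shilov's theorem, applicable because $\M(\A)$ is compact and $\widehat{\A}$ is a separating subalgebra of $C(\M(\A))$: the Shilov boundary $\partial \widehat{\A}$ is the intersection of all closed boundaries of $\widehat{\A}$, and in particular is contained in every closed boundary. Applying this to the closed boundary $\overline{\{\delta_z : z \in \Gamma\}}^{\,\sigma}$ yields the stated inclusion. There is no real obstacle here; the only point requiring a moment's care is that $\delta_z$ is indeed a homomorphism on $\A_u(B_X)$ or $\A_\infty(B_X)$ for every $z \in \overline{B}_{X^{**}}$, which is precisely what the Aron--Berner extension provides.
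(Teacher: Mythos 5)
Your argument is correct and follows the same route the paper takes: the displayed chain of equalities shows $\{\delta_z : z \in \Gamma\}$ is a boundary for $\widehat{\A}$, its $\sigma$-closure is then a closed boundary by continuity of each $\widehat{f}$, and Shilov's theorem on the compact space $\M(\A)$ gives the inclusion. The paper leaves these last two steps implicit ("this simple observation yields the following"), so your write-up merely makes explicit what the authors intended.
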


As noticed in the above comment, for a uniform algebra, the set of strong boundary points is related to its Shilov boundary. Thus, in order to describe the Shilov boundary for $\widehat{\A}$, where $\A = \A_u (B_X)$ or $\A_\infty (B_X)$, it is natural to compare the behavior of strong boundary points for $\A$ and for $\widehat{\A}$. The following result shows that they have some relevance, and it will play an important role in the sequel. 
 
 \begin{theorem}\label{prop:strong_peak}
 Let $X$ be a Banach space and $\A = \A_u (B_X)$ or $\A_\infty (B_X)$. 
 \begin{enumerate} 
 \setlength\itemsep{0.4em}
 \item If $x \in S_X$ is a strong boundary point for $\A_a (B_X)$, then $\delta_x$ is a strong boundary point for $\widehat{\A}$. 
 \item If $x \in S_X$ is a strong peak point for $\A_a (B_X)$, then $\delta_x$ is a peak point for $\widehat{\A}$.
 \item If $\delta_x$ is a peak point for $\M (\A)$ for some $x \in S_X$, then $x$ is a peak point for $\A$. 
 \end{enumerate}
 \end{theorem}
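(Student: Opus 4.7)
The plan is to translate peak and boundary behavior of $x\in S_X$ with respect to $\A_a(B_X)$ into the analogous behavior of $\delta_x$ in $\M(\A)$, relying on three facts. First, for every $h\in \A_a(B_X)$ and $\phi\in\M(\A)$ one has $\phi(h)=\widetilde h(\pi(\phi))$: this identity holds on the generating polynomials by multiplicativity (since $\phi|_{X^*}=\pi(\phi)$ and the Aron--Berner extension of a product of linear functionals is the corresponding product of weak-star evaluations), and it transfers to the uniform closure $\A_a(B_X)$. Second, for every such $h$, $\widetilde h$ is $w^*$-continuous on the $w^*$-compact set $\overline B_{X^{**}}$, being a uniform limit of $w^*$-continuous polynomials generated by $X^*$. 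Third, when $h$ is a strong peak function at $x$, the ``concentration trick'' yields $\|f h^n\|\to |f(x)|$ as $n\to\infty$ for every $f\in\A$, obtained by splitting $\overline B_X$ into a small norm-neighborhood of $x$ (where $|f-f(x)|$ is small and $|h|\le 1$) and its complement (where $|h|^n$ decays geometrically by strong peakness).

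For (1), given a Gelfand neighborhood $\mathcal U$ of $\delta_x$, choose a basic subneighborhood $W=\{\phi\in\M(\A):|\phi(f_i)|<\eps,\ i=1,\dots,k\}\subseteq \mathcal U$ with $\|f_i\|\le 1$ and, after subtracting the constants $f_i(x)$, with $f_i(x)=0$ for all $i$. Continuity of the $f_i$ at $x$ in $\overline B_X$ yields an open $V\ni x$ with $|f_i|<\eps/2$ on $V$. The strong-boundary hypothesis provides $h\in\A_a(B_X)$ with $h(x)=\|h\|=1$ and $r:=\sup_{\overline B_X\setminus V}|h|<1$. For $n$ large enough that $r^n<\eps/2$, splitting the supremum over $V$ and its complement gives $\|f_i h^n\|\le \eps/2$ for every $i$. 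Any $\phi\notin\mathcal U$ satisfies $|\phi(f_i)|\ge \eps$ for some $i$, so multiplicativity yields $|\phi(h^n)|\le \|f_i h^n\|/|\phi(f_i)|\le 1/2$. Combined with $\widehat{h^n}(\delta_x)=1=\|\widehat{h^n}\|$, this exhibits $\delta_x$ as a strong boundary point for $\widehat\A$.

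For (2), let $h\in\A_a(B_X)$ be a strong peak function at $x$. Combining strong peakness with the $w^*$-continuity of $\widetilde h$ and Goldstine's theorem, we first show $|\widetilde h(z)|<1$ for every $z\in\overline B_{X^{**}}\setminus\{x\}$: a $w^*$-approximating net $(x_\alpha)\subseteq\overline B_X$ of $z$ satisfies $|h(x_\alpha)|\to|\widetilde h(z)|$, and if this limit were $1$ the strong peak property would force $x_\alpha\to x$ in norm, hence $z=x$. Now take $\phi\ne\delta_x$ in $\M(\A)$. If $\pi(\phi)\ne x$, then $|\widehat h(\phi)|=|\widetilde h(\pi(\phi))|<1$. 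If $\pi(\phi)=x$, then $\phi(h)=\widetilde h(x)=1$, so for every $f\in\A$ the concentration trick gives $|\phi(f)|=|\phi(f h^n)|\le\|fh^n\|\to|f(x)|$; applying this to $f-f(x)\cdot 1$ yields $\phi(f)=f(x)$ for all $f\in\A$, contradicting $\phi\ne\delta_x$. Thus $\widehat h$ peaks $\widehat\A$ at $\delta_x$.

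For (3), if $\widehat f$ peaks $\widehat\A$ at $\delta_x$ with $f\in\A$, then $f$ itself is a peak function for $\A$ at $x$: for each $y\in\overline B_X\setminus\{x\}$, the inclusion $X^*\subseteq\A_a\subseteq\A$ together with the separation of points of $X$ by $X^*$ gives $\delta_y\ne\delta_x$ in $\M(\A)$, so $|f(y)|=|\widehat f(\delta_y)|<1=f(x)=\|f\|$. The main obstacle of the whole argument is the fiber case $\pi(\phi)=x$ in (2): ruling out non-standard homomorphisms above $x$ is precisely where the concentration trick $\|fh^n\|\to|f(x)|$ is indispensable.
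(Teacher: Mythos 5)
Your proof is correct, and part (1) takes a genuinely different route from the paper's. The paper proves (1) in two steps: a fiber lemma (STEP I: any $\phi\in\M(\A)$ with $\pi(\phi)=x_0$ must equal $\delta_{x_0}$, obtained by damping $g-g(x_0)$ with high powers of $\frac{1+f_n}{2}$), followed by the construction of a single series $g=\sum_n 2^{-(n-n_0+1)}g_n$ of strong-boundary functions at all scales, where $|\psi(g)|=1$ forces $\pi(\psi)=x_0$ and hence, via STEP I, $\psi=\delta_{x_0}$; compactness of $\M(\A)\setminus W$ then upgrades the pointwise bound to a supremum bound. You bypass both the fiber lemma and the infinite sum: normalizing the defining functions $f_i$ of the neighborhood to vanish at $x$, damping them by $h^n$, and exploiting multiplicativity via $|\phi(f_i)|\,|\phi(h^n)|=|\phi(f_ih^n)|\le\|f_ih^n\|$ gives the quantitative bound $|\phi(h^n)|\le 1/2$ off the neighborhood directly, with no compactness argument needed. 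For (2) both proofs reduce to the fiber over $x$ using $w^*$-continuity of $\widetilde h$ on $\overline B_{X^{**}}$; the paper then cites STEP I, while your concentration trick $\|(f-f(x))h^n\|\to 0$ is a self-contained re-derivation of the fiber-singleton property in the strong-peak case (it is the same power-damping idea, run with the single peak function instead of a sequence of functions at shrinking scales). Part (3) is identical to the paper's. The one thing the paper's organization buys that yours does not is the standalone fiber lemma for strong \emph{boundary} points of $\A_a(B_X)$, which the authors single out as generalizing a lemma of Aron--Carando--Gamelin--Lassalle--Maestre and reuse later (e.g.\ in the proof of their Theorem 4.2); your argument proves the theorem as stated without establishing that reusable intermediate statement.
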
 
 
 \begin{proof}
(1): Let $x_0 \in S_X$ be a strong boundary point for $\A_a (B_X)$. 

\noindent \emph{STEP I}. If $\phi \in \M (\A)$ satisfies that $\pi (\phi) = x_0$, then $\phi = \delta_{x_0}$. 

To this end, let $g \in \A \setminus \{0\}$ be given. For $n \in \N$, choose $r_n > 0$ such that $| \widetilde{g}(z) -  \widetilde{g}(x_0)| < \frac{1}{n}$ for every $z \in \overline{B}_{X^{**}}$ with $\| z - x_0 \| < r_n$, where $ \widetilde{g}$ is the Aron-Berner extension of $g$. Since $x_0$ is a strong boundary point for $\A_a (B_X)$, there exists $f_n \in \A_a (B_X)$ such that $f_n (x_0) = \|f_n\| = 1$ and $\sup \{ |f_n (y)| : \| y - x_0\| \geq r_n \}  < 1$. It follows that there exists $M_n \in \N$ such that 
\[
\sup \left\{ \left| \frac{1 + f_n (y)}{2} \right|^{M_n} : \| y - x_0\| \geq r_n \right\} < \frac{1}{2n\|g\|}.
\]
Thus, we can conclude that 
\[
\left\| \left(\frac{1 + f_n }{2} \right)^{M_n}  (g- g(x_0)) \right\| \leq \frac{1}{n};
\]
hence 
\begin{equation}\label{eq:g-g(x_0)}
\left[ 1 - \left(\frac{1 + f_n }{2} \right)^{M_n} \right] (g- g(x_0)) \rightarrow g-g(x_0)
\end{equation} 
as $n \rightarrow \infty$. On the other hand, since  
\[
\phi \left( \frac{1 + f_n}{2} \right) = \frac{1 + \phi(f_n)}{2} = \frac{ 1+ f_n (x_0)}{2} = 1
\]
for each $n \in \N$, we have from \eqref{eq:g-g(x_0)} that $\phi (g) = g(x_0)$. Since $g$ is arbitrarily chosen, we conclude that $\phi = \delta_{x_0}$.  

\noindent \emph{STEP II}. $\delta_{x_0}$ is a strong boundary point for $\widehat{\A}$. 

Let $\eps >0$ and set $W := \{ \phi \in \M(\A) : |(\phi - \delta_{x_0}) (h_j) | < \eps \text{ for each } j =1,\ldots, N \}$ with $h_1,\ldots, h_N \in \A$. It suffices to prove that there exists $g \in \A$ such that $g(x_0) = \|g \| = 1$ and $|\psi (g) | < 1$ for every $\psi \in \M(\A) \setminus W$ thanks to the compactness of $\M(\A)$. Since each $h_j$ is continuous at $x_0$, we can choose $ r > 0$ small enough such that if $z \in \overline{B}_{X^{**}}$ satisfies $\| z - x_0 \| < r$, then $\delta_z \in W$. Take $n_0 \in \N$ so that ${n_0}^{-1} < r$ and for each $n \geq n_0$, consider $g_n \in \A_a (B_X)$ satisfying that $g_n (x_0) = \|g_n \| = 1$ and $\sup \{ |g_n (y)| : \| y - x_0\| \geq n^{-1} \} < 1$. 
Define 
\[
g := \sum_{n=n_0} \frac{1}{2^{n-n_0+1}} g_n
\]
and note that $g \in \A$ with $\|g \| = 1$. Note also that $g (x_0) = 1$. Now, for $\psi \in \M(\A) \setminus W$, if we assume that $| \psi (g) | =1$, then this implies that 
\[
1 = | \psi (g) | = \left| \sum_{n=n_0} \frac{1}{2^{n-n_0 +1}} g_n (\pi (\psi)) \right| \leq 
\sum_{n=n_0} \frac{1}{2^{n-n_0 +1}} \left| g_n (\pi (\psi)) \right| \leq 
\sum_{n=n_0} \frac{1}{2^{n-n_0 +1}} \leq 1;
\]
hence $|g_n (\pi (\psi))| = 1$ for all $n \geq n_0$. From this, we have that $\| \pi (\psi) - x_0 \| < n^{-1}$ for all $n \geq n_0$, which implies that $\pi (\psi ) = x_0$. By STEP I, we conclude that $\psi = \delta_{x_0}$. This contradicts to that $\psi \in \M (\A) \setminus W$; hence $|\psi (g)| < 1$. Thus, $g$ is a desired one. 

(2): Let $x \in S_X$ be a strong peak point for $\A_a (B_X)$ and let $\psi \in \M(A) \setminus \{\delta_x\}$ be given. Find $f \in \A_a (B_X)$ with $\| f \|=1$ which peaks strongly at $x$. Notice that $| \widehat{f} (\psi)| = |\psi (f)| = | \widetilde{f} (\pi (\psi)) |$, where $ \widetilde{f}$ is the Aron-Berner extension of $f$. 
If $| \widetilde{f} (\pi (\psi))| = 1$, then for a net $(x_\alpha)$ in $B_X$ converging weak-star to $\pi (\psi)$, we obtain that $\lim_\alpha |f(x_\alpha)| = | \widetilde{f} (\pi (\psi) )| = 1$. This implies that $(x_\alpha)$ must converge in norm to the point $x$; hence $\pi (\psi) = x$. Since $x$ is, in particular, a strong boundary point for $\A_a (B_X)$, by the above STEP I, we see that $\psi = \delta_x$. This is a contradiction. Thus, $|\widehat{f} (\psi)| < 1$ and $\widehat{f} \in \widehat{\A}$ peaks at $\delta_x$. 

(3): Suppose that $\delta_x$ is a peak point for some $x \in S_X$. Let $f \in \A$ satisfy that $f(x) = 1$ and $|\psi (f)| < 1$ for any $\psi$ in $\M (\A)$ which is different from $\delta_x$. As this implies that $|f(y)| < 1$ for every $y \in \overline{B}_X$ with $y \neq x$, we conclude that $f$ peaks at the point $x$. 
 \end{proof} 
 
Let us remark that STEP I in the proof of Theorem \ref{prop:strong_peak} generalizes \cite[Lemma 2.4]{ACGLM}, where it is observed that for any strong peak point $x$ for $\A_a (B_X)$, if $\phi \in \M(\A)$ satisfies that $\pi (\phi) = x$, then $\phi = \delta_x$.

From the above results, we can obtain that if $X$ is a finite dimensional Banach space and $\A = \A_u (B_X)$ (which coincides with $\A_\infty (B_X)$ and with $\A_a (B_X)$), then the Shilov boundary $\partial \widehat{\A}$ of $\widehat{\A}$ is essentially equal to the Shilov boundary for $\A$. Before stating this result, let us recall that for a Banach space $X$, $x \in \overline{B}_X$ is said to be a \emph{$\mathbb{C}$-extreme point} if $y \in X$ satisfies that $\| x + \lambda y \| \leq 1$ for any $\lambda \in \mathbb{C}$ with $|\lambda| = 1$, then $y=0$. 
We denote by $\Ext_{\mathbb{C}} (\overline{B}_X)$ the set of all $\mathbb{C}$-extreme points of the closed unit ball $\overline{B}_X$.

\begin{proposition}
Let $X$ be a finite dimensional Banach space and let $\A = \A_u (B_X)$. Then, 
\[
\{ \delta_z : z \in {\Ext_{\mathbb{C}} (\overline{B}_X)} \} \subseteq \rho \widehat{\A} \subseteq \partial \widehat{\A} \subseteq \left\{ \delta_z : z \in \overline{\Ext_{\mathbb{C}} (\overline{B}_X)} \right\}.
\]
In particular, $ \partial \widehat{\A} = \left\{ \delta_z : z \in \overline{\Ext_{\mathbb{C}} (\overline{B}_X)} \right\}$. 
\end{proposition}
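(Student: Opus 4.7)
The plan is to first identify the spectrum $\M(\A)$ with $\overline{B}_X$ via the evaluation map $\Phi\colon z \mapsto \delta_z$. In finite dimensions $\overline{B}_X$ is compact and the algebra of polynomials generated by $X^*$ and constants is dense in $\A_u(B_X)$, so $\A_u(B_X)=\A_\infty(B_X)=\A_a(B_X)$. For any $\phi \in \M(\A)$, the point $a := \pi(\phi) \in X^{**} = X$ has $\|a\| \leq 1$, and $\phi$ coincides with $\delta_a$ on polynomials, hence on all of $\A$ by density; thus $\Phi$ is a continuous bijection between compact Hausdorff spaces and thus a homeomorphism, under which $\widehat{\A}$ is identified isometrically with $\A$. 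It then suffices to prove the three inclusions
\[
\Ext_{\mathbb{C}}(\overline{B}_X) \subseteq \rho\A \subseteq \partial\A \subseteq \overline{\Ext_{\mathbb{C}}(\overline{B}_X)}.
\]

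The leftmost inclusion will follow from Theorem \ref{prop:strong_peak}(2) once one knows that every $\mathbb{C}$-extreme point is a strong peak point for $\A_a(B_X)$ in finite dimensions; this classical function-algebra fact is obtained by combining a supporting functional $x^* \in S_{X^*}$ with $x^*(z_0)=1$ with a sufficiently high power of $(1+x^*)/2$ to suppress values outside small neighborhoods of $z_0$, using $\mathbb{C}$-extremality to rule out flat analytic discs in $\overline{B}_X$ through $z_0$. The middle inclusion is standard: peak points of any uniform algebra on a compact Hausdorff space lie in every closed boundary.

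For the rightmost inclusion, the Shilov boundary equals the closure of the set of strong boundary points \cite[Corollary 4.3.7]{Dales}, which in this compact metric setting coincides with $\overline{\rho\A}$. To show $\rho\A \subseteq \Ext_{\mathbb{C}}(\overline{B}_X)$, take $f \in \A$ peaking at $z$ and $y \in X$ with $\|z+\lambda y\| \leq 1$ for all $|\lambda|=1$. By convexity of $\overline{B}_X$ the inequality persists for $|\lambda|\leq 1$, and the rescalings $g_r(\lambda):=f(rz+r\lambda y)$ with $r<1$ are holomorphic on a neighborhood of $\overline{\mathbb{D}}$ and converge uniformly on $\overline{\mathbb{D}}$ to $\lambda \mapsto f(z+\lambda y)$; the latter is therefore in the disc algebra, the maximum modulus principle forces it to be identically $1$, and the peak property of $f$ then forces $y=0$. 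Taking closures gives the claimed inclusion, and the ``in particular'' equality follows because $\partial\widehat{\A}$ is closed, contains $\Phi(\Ext_{\mathbb{C}}(\overline{B}_X))$, and is contained in its closure. The main obstacle I anticipate is the careful justification of the classical correspondences between $\mathbb{C}$-extreme points and (strong) peak points of $\A_u(B_X)$; both directions are short analytic arguments but rely on nontrivial function-algebraic input that is worth stating explicitly rather than merely citing.
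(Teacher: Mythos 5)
Your overall route is sound but genuinely different from the paper's. You first identify $\M(\A)$ with $\overline{B}_X$ via $z\mapsto\delta_z$ (a continuous bijection between compact Hausdorff spaces, since a character agrees with the point evaluation at $\pi(\phi)$ on the dense subalgebra generated by $X^*$ and the constants), and then work with $\A$ itself as a uniform algebra on $\overline{B}_X$. For the upper inclusion you prove directly, via the rescaled discs $g_r(\lambda)=f(rz+r\lambda y)$ and the maximum modulus principle, that every peak point of $\A$ is a $\mathbb{C}$-extreme point, and combine this with the fact that on a compact metric space the Shilov boundary of a uniform algebra is the closure of its peak points. The paper instead quotes \cite[Proposition 1.1]{CHL} for $\partial\A=\overline{\Ext_{\mathbb{C}}(\overline{B}_X)}$ and gets the upper inclusion from Proposition \ref{prop:boundary_norming} together with the closedness of $\left\{\delta_z : z\in\overline{\Ext_{\mathbb{C}}(\overline{B}_X)}\right\}$. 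Your version is more self-contained on that side and the disc argument is correct; it is exactly the Globevnik-type argument the paper invokes elsewhere.

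The one genuine gap is in your sketch of the lower inclusion. The fact you need --- that every point of $\Ext_{\mathbb{C}}(\overline{B}_X)$ is a strong peak point for $\A_a(B_X)$ --- is Arenson's theorem, which is precisely what the paper cites, and citing it would be fine. But the argument you sketch for it does not establish it: a high power of $(1+x^*)/2$ with $x^*(z_0)=\|x^*\|=1$ has modulus $1$ on the entire face $\{x^*=1\}\cap\overline{B}_X$, and a $\mathbb{C}$-extreme point need not be exposed, so no single supporting functional need isolate $z_0$ from the rest of that face (extreme points of convex bodies fail to be exposed already in low dimensions, and $\mathbb{C}$-extremality only excludes analytic discs through $z_0$ inside $\overline{B}_X$, which is not the relevant obstruction here). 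As written your construction proves the claim only for exposed points of $\overline{B}_X$. Either cite \cite{Arenson} outright and then apply Theorem \ref{prop:strong_peak}(2), as the paper does, or replace the heuristic by a correct construction; everything else in your proposal goes through.
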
 

\begin{proof}
Notice that the Shilov boundary for $\A$ is $\overline{ \Ext_{\mathbb{C}} (\overline{B}_X) }$ (see, for instance, \cite[Proposition 1.1]{CHL}). 
As $\overline{ \Ext_{\mathbb{C}} (\overline{B}_X) }$ is compact, one can check that $\{ \delta_z : z \in \overline{ \Ext_{\mathbb{C}} (\overline{B}_X)} \} $ is closed in $\M (\A)$. 
It follows from Proposition \ref{prop:boundary_norming} that the Shilov boundary for $\widehat{\A}$ is contained in $\{ \delta_z : z \in \overline{ \Ext_{\mathbb{C}} (\overline{B}_X)} \} $. On the other hand, $\Ext_{\mathbb{C}} (\overline{B}_X)$ is the same as the set of all strong peak points for $\A_a (B_X)$ \cite{Arenson}. Thus, by Theorem \ref{prop:strong_peak}, we have that $\{ \delta_z : z \in \overline{ \Ext_{\mathbb{C}} (\overline{B}_X)} \} $ is contained in $\rho \widehat{\A}$. 
\end{proof} 
  
In the above proof, let us point out that $\left\{ \delta_z : z \in \overline{\Ext_{\mathbb{C}} (\overline{B}_X)} \right\}$ is closed (hence compact) in $\M (\A)$ due to the relative compactness of $\Ext_{\mathbb{C}} (\overline{B}_X) \subseteq S_X$. For a Banach space $X$, we shall observe that this happens only when $X$ is finite dimensional. 

\begin{proposition}
Let $X$ be a Banach space and $\A = \A_u (B_X)$ or $\A_\infty (B_X)$. Then the set $\{\delta_x : x \in S_X \}$ is closed in $\M(\A)$ if and only if $X$ is finite dimensional. 
\end{proposition}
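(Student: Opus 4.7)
The plan is to treat the two implications separately, using continuity of the canonical map $\pi\colon \M(\A) \to \overline{B}_{X^{**}}$ (from the Gelfand topology to the weak-star topology), which holds because $X^* \subseteq \A$ and so weak-star convergence of homomorphisms on $\A$ forces weak-star convergence of their restrictions to $X^*$.

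For the sufficiency direction, suppose $X$ is finite dimensional. Then $S_X$ is norm compact, and the map $\Phi\colon (S_X,\|\cdot\|) \to (\M(\A),\sigma)$ defined by $\Phi(x) = \delta_x$ is continuous: if $x_n \to x$ in norm then $f(x_n) \to f(x)$ for every $f \in \A$, since each such $f$ extends continuously to $\overline{B}_X$. Because $\M(\A)$ is Hausdorff, the image $\{\delta_x : x\in S_X\} = \Phi(S_X)$ is compact, hence closed.

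For the necessity direction, assume $\{\delta_x : x \in S_X\}$ is closed in the compact Hausdorff space $\M(\A)$, so that it is compact. The restriction of $\pi$ to this set is a continuous bijection onto $S_X$, since $\pi(\delta_x)=x$ under the canonical isometric embedding $X\hookrightarrow X^{**}$; moreover, the weak-star topology on $X^{**}$ agrees on $X$ with the weak topology of $X$. A continuous bijection from a compact space onto a Hausdorff space is a homeomorphism, so $S_X$ is weakly compact in $X$, and in particular weakly closed.

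The remaining step — and the only real content of the proof — is to observe that $S_X$ is never weakly closed when $X$ is infinite dimensional, because $0$ lies in its weak closure. Indeed, every basic weak neighborhood of $0$ has the form $U=\{x\in X : |x_j^*(x)|<\varepsilon,\ j=1,\dots,n\}$ for some $x_1^*,\dots,x_n^*\in X^*$, and $U$ contains the subspace $\bigcap_{j=1}^n \ker x_j^*$, which has codimension at most $n$ and is therefore infinite dimensional. Any such subspace meets $S_X$, so $U\cap S_X\neq \emptyset$. Hence $0\in \overline{S_X}^{\,w}\setminus S_X$, contradicting the weak closedness of $S_X$. Thus $X$ must be finite dimensional. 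I do not expect any genuine obstacle here: all the ingredients (continuity of $\pi$, the compact-to-Hausdorff homeomorphism principle, and the folklore fact $0\in\overline{S_X}^{\,w}$) are standard.
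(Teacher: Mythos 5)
Your proof is correct, and the necessity direction takes a genuinely different — and more elementary — route than the paper's. The paper argues in two steps: first, if $X$ fails the Schur property, a weakly null sequence in $S_X$ produces a Gelfand-cluster point $\phi$ with $\pi(\phi)=0$, which cannot be $\delta_x$ for any $x\in S_X$; second, if $X$ is infinite dimensional with the Schur property, Rosenthal's $\ell_1$-theorem gives an embedded copy of $\ell_1$, and a subnet argument on the normalized images of the canonical basis yields a contradiction. You bypass this dichotomy entirely: since $\M(\A)$ is compact, closedness of $\{\delta_x : x\in S_X\}$ makes it compact, and its image under the continuous map $\pi$ is $S_X$ with the weak topology, which would then be weakly compact, hence weakly closed — impossible in infinite dimensions because $0$ always lies in the weak closure of $S_X$ (every basic weak neighborhood of $0$ contains a nontrivial finite-codimensional subspace, hence meets the sphere; note this is a net-closure statement, so it is not obstructed by the Schur property, which only controls sequences). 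Your argument needs no Rosenthal's theorem and no case distinction, at the price of relying on the folklore fact about the weak closure of the sphere, which you correctly justify; the paper's proof, while longer, stays closer to explicit sequences and subnets of point evaluations. The sufficiency direction is essentially identical in both.
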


\begin{proof}
Suppose that $\{ \delta_x : x \in S_X\}$ is closed in $\M(\A)$. 

\noindent \emph{STEP I}. $X$ must have the Schur property. 

Assume to the contrary that $X$ does not have the Schur property. Then there is a sequence $(x_n)$ in $S_X$ such that $(x_n)$ converges in weak topology to $0$. Take a subnet $(\delta_{x_\alpha})$ which converges in the Gelfand topology to some $\phi$ in $\M(\A)$. Then $(x_\alpha)=(\pi (\delta_{x_\alpha}))$ converges weak-star to $\pi (\phi)$; hence $\pi (\phi) = 0$. It follows that $\phi \in \overline{\{\delta_x : x \in S_X \}}^{\,\sigma} \setminus \{\delta_x : x \in S_X \}$; hence the set $\{ \delta_x : x \in S_X\}$ is not closed. 

\noindent \emph{STEP II}. $X$ must be a finite dimensional space.  

Assume to the contrary that $X$ is an infinite dimensional Banach space. Since $X$ has the Schur property, by Rosenthal's $\ell_1$-theorem \cite{Rosenthal}, $X$ has an isomorphic copy of $\ell_1$. Let $T$ be a norm one isomorphism from $\ell_1$ into $X$ and denote by $(e_n)$ the canonical basis of $\ell_1$. For each $n \in \N$, let $x_n := T(e_n)$. Put $z_n = x_n/\|x_n\| \in S_X$ for each $n \in \N$ and take a subnet $(z_\alpha)$ so that $(\delta_{z_\alpha})$ converges in the Gelfand topology to some $\phi$ in $\M(\A)$. Since $\{ \delta_x : x \in S_X\}$ is closed, $\phi = \delta_u$ for some $u \in S_X$. In particular, this implies that $(z_\alpha)$ converges weakly to $u$ in $X$. 
As $\| x_\alpha \| \geq 1/ \|T^{-1}\|$ for each $\alpha$, passing to a subnet, we may assume that $\|x_\alpha\| \rightarrow r$ for some $r >0$. On the other hand, since $u \in T(\ell_1)$, we have that 
\[
\frac{1}{\|x_\alpha\|} e_\alpha = \frac{1}{\|x_\alpha\|} T^{-1} (x_\alpha) = T^{-1} (z_\alpha) \rightarrow T^{-1} (u)
\] 
weakly in $\ell_1$. On the other hand, $(\|x_\alpha\|^{-1} e_\alpha)$ converges weakly to $0$ in $\ell_1$, which implies that $T^{-1} (u)$ must be zero; hence $u$ is zero. This is a contradiction.

Conversely, assume that $X$ is a finite dimensional Banach space. Then $\{ \delta_x : x \in S_X \}$ is clearly closed in $\M (\A)$ since the unit sphere $S_X$ is compact. 
\end{proof}

Next, we present a result which will yield examples of Banach spaces $X$ where the Shilov boundary for $\A_u (B_X)$ or $\A_\infty (B_X)$ can be explicitly obtained. Recall that a point $x \in S_X$ is said to be a \emph{strongly exposed point} if there exists $x^* \in S_{X^*}$ such that $x^* (x) = 1$ and $(x_n)$ converges to $x$ in norm whenever $(\re x^* (x_n))$ converges to $1$. Notice that each strongly exposed point in $S_X$ is a strong peak point for $\A_a (B_X)$. Indeed, for a strongly exposed point $x \in S_X$, take $x^* \in S_{X^*}$ which strongly exposes at $x$. Then it is easy to check that $f:=\frac{1}{2}(1+x^*)$ strongly peak at $x$. 
 
 \begin{proposition}\label{prop:LUR}
 Let $X$ be a Banach space such that every point in $S_X$ is a strongly exposed point and $\A = \A_u (B_X)$ or $\A_\infty (B_X)$. Then %
 \[
\{ \delta_x : x \in S_X\} \subseteq \rho \widehat{\A} \subseteq \partial \widehat{\A} \subseteq  \overline{\{ \delta_x : x \in S_X\}}^{\,\sigma}.
 \]
 In particular, $\partial \widehat{\A} = \overline{\{ \delta_x : x \in S_X\}}^{\,\sigma}$. 
 \end{proposition}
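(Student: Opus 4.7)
The proof assembles three inclusions, each handled by invoking a result already in the excerpt; the "In particular" statement then follows because the Shilov boundary is $\sigma$-closed.

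First I would prove $\{\delta_x : x \in S_X\} \subseteq \rho\widehat{\A}$. The paragraph just before the proposition explicitly records that every strongly exposed point of $S_X$ is a strong peak point for $\A_a(B_X)$ (the peaking function is $f = \tfrac{1}{2}(1+x^*)$ where $x^* \in S_{X^*}$ strongly exposes $x$). Since the hypothesis says every point of $S_X$ is strongly exposed, every $x \in S_X$ is a strong peak point for $\A_a(B_X)$. Now Theorem \ref{prop:strong_peak}(2) applies directly to give $\delta_x \in \rho\widehat{\A}$.

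Next, the inclusion $\rho\widehat{\A} \subseteq \partial\widehat{\A}$ is a general fact about uniform algebras on compact spaces: $\M(\A)$ is compact in the Gelfand topology, and the paper recalls that on a compact space each peak point is a strong boundary point, and every strong boundary point lies in every closed boundary (hence in the intersection, the Shilov boundary). The third inclusion $\partial\widehat{\A} \subseteq \overline{\{\delta_x : x \in S_X\}}^{\,\sigma}$ is already isolated in the paragraph following Proposition \ref{prop:general}: strong boundary points for $\widehat{\A}$ lie in $\overline{\{\delta_x : x \in S_X\}}^{\,\sigma}$ by Proposition \ref{prop:general}, and they are dense in $\partial\widehat{\A}$ by \cite[Corollary 4.3.7]{Dales}; taking $\sigma$-closure preserves the containment in the closed set $\overline{\{\delta_x : x \in S_X\}}^{\,\sigma}$.

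For the "In particular" statement, I would note that $\partial\widehat{\A}$ is $\sigma$-closed (as an intersection of closed boundaries), and by the first two inclusions it contains $\{\delta_x : x \in S_X\}$; therefore it contains $\overline{\{\delta_x : x \in S_X\}}^{\,\sigma}$, which combined with the third inclusion yields equality. Since every ingredient is handed to us by the earlier results, there is no real obstacle; the only point that needs a line of justification is the transfer from "strongly exposed" to "strong peak point for $\A_a(B_X)$", and this is already supplied in the paragraph preceding the proposition.
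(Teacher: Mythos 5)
Your proposal is correct and follows essentially the same route as the paper's proof: the paper likewise reduces everything to showing $\{\delta_x : x\in S_X\}\subseteq\rho\widehat{\A}$ via Theorem \ref{prop:strong_peak}(2) and the observation that strongly exposed points are strong peak points for $\A_a(B_X)$, with the outer inclusion supplied by the comment after Proposition \ref{prop:general}. You have merely spelled out the intermediate steps (peak point $\Rightarrow$ strong boundary point $\Rightarrow$ member of every closed boundary, and the closedness of $\partial\widehat{\A}$ for the final equality) that the paper leaves implicit.
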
 
 
 \begin{proof}
By the comment after Proposition \ref{prop:general}, it suffices to show that $\{ \delta_x : x \in S_X \}$ is contained in $\rho \widehat{\A}$. To this end, note that Theorem \ref{prop:strong_peak} combined with the fact that every point of $S_X$ is a strong peak point for $\A_a (B_X)$ implies that each $\delta_x$ with $x \in S_X$ is a peak point for $\widehat{\A}$. 
 \end{proof} 
 
It can be easily checked that when $X$ is a locally uniformly convex Banach space, every point in $S_X$ is a strongly exposed point; hence a strong peak point for $\A_a (B_X)$. 

\begin{cor}
 Let $X$ be a locally uniformly convex Banach space and $\A = \A_u (B_X)$ or $\A_\infty (B_X)$. Then %
 \[
\{ \delta_x : x \in S_X\} \subseteq \rho \widehat{\A} \subseteq \partial \widehat{\A} \subseteq  \overline{\{ \delta_x : x \in S_X\}}^{\,\sigma}.
 \]
 In particular, $\partial \widehat{\A} = \overline{\{ \delta_x : x \in S_X\}}^{\,\sigma}$. 
\end{cor}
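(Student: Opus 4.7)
The plan is to reduce the corollary immediately to Proposition \ref{prop:LUR}. All that needs to be checked is that, in a locally uniformly convex Banach space $X$, every point of $S_X$ is a strongly exposed point of $\overline{B}_X$; once this is established, the displayed chain of inclusions, and the resulting identification $\partial \widehat{\A} = \overline{\{\delta_x : x \in S_X\}}^{\,\sigma}$, are precisely what Proposition \ref{prop:LUR} delivers.

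To verify that reduction, I would fix $x \in S_X$ and choose by Hahn--Banach a norming functional $x^* \in S_{X^*}$ with $x^*(x) = 1$, and then show that $x^*$ strongly exposes $B_X$ at $x$. Given $(x_n) \subset B_X$ with $\re x^*(x_n) \to 1$, the sandwich $\re x^*(x_n) \leq \|x_n\| \leq 1$ forces $\|x_n\| \to 1$, so the normalized sequence $y_n := x_n/\|x_n\| \in S_X$ satisfies $\|x_n - y_n\| = |\,1 - \|x_n\|\,| \to 0$. It therefore suffices to prove $y_n \to x$, and for this I would invoke the standard sequential reformulation of local uniform convexity: whenever $(y_n) \subset S_X$ and $\|x + y_n\| \to 2$, one has $y_n \to x$ in norm. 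The hypothesis is supplied by the estimate $\|x + y_n\| \geq \re x^*(x) + \re x^*(y_n) = 1 + \re x^*(x_n)/\|x_n\| \to 2$, together with the trivial upper bound $\|x + y_n\| \leq 2$.

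There is no substantive obstacle in the argument; it is essentially the well-known implication that LUR implies every unit-sphere point is strongly exposed, carried over verbatim to the complex setting (the real part in $\re x^*$ is what one needs). The only small point of care is the normalization step that moves the approximating sequence from $B_X$ into $S_X$ so that the LUR condition is directly applicable, after which Proposition \ref{prop:LUR} and Theorem \ref{prop:strong_peak}(2) combine to give the stated inclusions.
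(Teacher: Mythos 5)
Your proposal is correct and follows exactly the paper's route: the paper disposes of the corollary by remarking that in a locally uniformly convex space every point of $S_X$ is strongly exposed (hence a strong peak point for $\A_a(B_X)$) and then invoking Proposition \ref{prop:LUR}. You merely write out the ``easily checked'' normalization-and-LUR argument that the paper leaves implicit, and that argument is sound.
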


 Also, it is known \cite{AL} that any point of the unit sphere of $X=\ell_1$ or $X=d(w,1)$ for a decreasing sequence $w=(w_n)$ of positive real numbers with $w_1 = 1$ and $w \in c_0 \setminus \ell_1$, the complex Lorentz sequence space, is a strong peak point for $\A_a (B_X)$. So, arguing in the same way as in Proposition \ref{prop:LUR}, we have the following. 
 
  \begin{proposition}
 Let $X = \ell_1$ or $d(w,1)$ for a decreasing sequence $w=(w_n)$ of positive real numbers with $w_1 = 1$ and $w \in c_0 \setminus \ell_1$,  and $\A = \A_u (B_X)$ or $\A_\infty (B_X)$. Then %
 \[
\{ \delta_x : x \in S_X\} \subseteq \rho \widehat{\A} \subseteq \partial \widehat{\A} \subseteq  \overline{\{ \delta_x : x \in S_X\}}^{\,\sigma}.
 \]
 In particular, $\partial \widehat{\A} = \overline{\{ \delta_x : x \in S_X\}}^{\,\sigma}$. 
 \end{proposition}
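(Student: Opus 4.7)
The plan is to follow the argument of Proposition \ref{prop:LUR} verbatim, replacing the role played by local uniform convexity with the result of \cite{AL} that for $X = \ell_1$ or $X = d(w,1)$ (with the stated weight sequence), every point $x \in S_X$ is a strong peak point for $\A_a(B_X)$. Once this input is in place, the three inclusions follow from the general results already established, so the only real task is assembling them in the correct order.

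First I would establish the leftmost inclusion. For any $x \in S_X$, the cited result from \cite{AL} gives a function $f \in \A_a(B_X)$ which strongly peaks at $x$. Then Theorem \ref{prop:strong_peak}(2) applies directly: $\delta_x$ is a peak point for $\widehat{\A}$, and therefore $\delta_x \in \rho \widehat{\A}$.

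Next I would note that the middle inclusion $\rho \widehat{\A} \subseteq \partial \widehat{\A}$ is the standard fact that for a uniform algebra on a compact Hausdorff space (here $\M(\A)$ is compact in the Gelfand topology), every peak point is a strong boundary point and hence belongs to every closed boundary, and in particular to the Shilov boundary. The rightmost inclusion $\partial \widehat{\A} \subseteq \overline{\{\delta_x : x \in S_X\}}^{\,\sigma}$ is precisely what is observed in the paragraph following Proposition \ref{prop:general}: the set of strong boundary points for $\widehat{\A}$ is contained in $\overline{\{\delta_x : x \in S_X\}}^{\,\sigma}$, and by \cite[Corollary 4.3.7]{Dales} this set of strong boundary points is dense in $\partial \widehat{\A}$, so closedness of $\overline{\{\delta_x : x \in S_X\}}^{\,\sigma}$ forces the inclusion.

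Finally, for the ``in particular'' statement, I would take Gelfand-topology closures throughout the chain. Since $\partial \widehat{\A}$ is closed in $\M(\A)$, the inclusion $\{\delta_x : x \in S_X\} \subseteq \partial \widehat{\A}$ yields $\overline{\{\delta_x : x \in S_X\}}^{\,\sigma} \subseteq \partial \widehat{\A}$, and combined with the rightmost inclusion this gives equality. There is no genuine obstacle here beyond invoking the correct external ingredient; the entire proof is a short citation-plus-assembly argument, and the only point requiring any care is verifying that the results of \cite{AL} apply in both the $\ell_1$ case and the $d(w,1)$ case under the stated hypothesis $w_1 = 1$ and $w \in c_0 \setminus \ell_1$, which is exactly the setting treated there.
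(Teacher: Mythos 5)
Your proposal is correct and follows essentially the same route as the paper, which explicitly says to argue as in Proposition \ref{prop:LUR} after importing from \cite{AL} the fact that every point of $S_X$ is a strong peak point for $\A_a(B_X)$; the leftmost inclusion then comes from Theorem \ref{prop:strong_peak}, and the rightmost from the remark following Proposition \ref{prop:general}. The assembly of the inclusions and the closure argument for the ``in particular'' part match the paper's intended proof.
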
 
 
 From a careful examination of the proofs of \cite[Proposition 3.2 and Corollary 3.4]{CHL}, we can see that if $X$ is a locally $c$-uniformly convex order continuous sequence space, then a finitely supported point $x_0$ in $S_X$ is a strong peak point for $\A_a (B_X)$. Indeed, it is shown that if $x_0$ is finitely supported, then there exists $g \in \A_u (B_Y)$ and $P$ a projection from $X$ onto $Y$ such that $g \circ P$ strongly peak at $x_0$, where $Y$ is a finite dimensional subspace of $X$. Note that $\A_u (B_Y) = \A_a (B_Y)$ since $Y$ is finite dimensional; hence $g \circ P$ belongs to $\A_a (B_X)$. 

\begin{proposition}
Let $X$ be a locally $c$-uniformly convex order continuous sequence space and $\A = \A_u (B_X)$ or $\A_\infty (B_X)$. Then %
 \[
\{ \delta_x : x  \text{ is finitely supported in } S_X \} \subseteq \rho \widehat{\A} \subseteq \partial \widehat{\A} \subseteq  \overline{\{ \delta_x : x \in S_X\}}^{\,\sigma}.
 \]
In particular, $\partial \widehat{\A} = \overline{\{ \delta_x : x \in S_X\}}^{\,\sigma}$. 
\end{proposition}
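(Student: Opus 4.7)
The plan is to mirror the argument structure of the two immediately preceding propositions, with only the lowest inclusion requiring a new ingredient. For the top inclusion $\partial \widehat{\A} \subseteq \overline{\{\delta_x : x \in S_X\}}^{\,\sigma}$, I would simply invoke the comment following Proposition~\ref{prop:general}: strong boundary points for $\widehat{\A}$ lie in $\overline{\{\delta_x : x \in S_X\}}^{\,\sigma}$, and by \cite[Corollary 4.3.7]{Dales} they are dense in $\partial\widehat{\A}$. The middle inclusion $\rho\widehat{\A} \subseteq \partial\widehat{\A}$ is the standard fact that peak points of a uniform algebra on the compact space $\M(\A)$ are strong boundary points, hence belong to every closed boundary.

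For the lower inclusion, I would use exactly the observation recorded just before the statement: if $x_0$ is a finitely supported point of $S_X$, then by \cite[Proposition 3.2 and Corollary 3.4]{CHL} there exist a finite dimensional subspace $Y \subseteq X$ containing $x_0$, a projection $P : X \to Y$, and a function $g \in \A_u(B_Y) = \A_a(B_Y)$ such that $g \circ P$ strongly peaks at $x_0$. Since $Y$ is finite dimensional, $g \circ P$ automatically lies in $\A_a(B_X)$, so Theorem~\ref{prop:strong_peak}(2) upgrades $x_0$ to a peak point $\delta_{x_0}$ for $\widehat{\A}$, yielding
\[
\{\delta_x : x \text{ finitely supported in } S_X\} \subseteq \rho\widehat{\A}.
\]

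The ``In particular'' part then falls out by a density argument. Because $X$ is an order continuous sequence space, the finite-section truncations $P_n x$ of any $x \in S_X$ converge to $x$ in norm; normalizing (they are eventually nonzero) produces finitely supported unit vectors $y_n \in S_X$ with $y_n \to x$ in norm. Since every $f \in \A$ extends continuously to $\overline{B}_X$, norm convergence $y_n \to x$ induces Gelfand convergence $\delta_{y_n} \to \delta_x$ in $\M(\A)$. Combining this with the already established chain and the closedness of $\partial\widehat{\A}$ gives
\[
\overline{\{\delta_x : x \in S_X\}}^{\,\sigma} \subseteq \overline{\{\delta_x : x \text{ finitely supported in } S_X\}}^{\,\sigma} \subseteq \partial\widehat{\A},
\]
which together with the top inclusion closes the loop and delivers the desired equality.

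I do not anticipate a real obstacle: the novel content is essentially absorbed into the cited strong-peak statement from \cite{CHL}, and the remaining work is the standard bookkeeping already used to establish the analogous results for locally uniformly convex spaces, $\ell_1$, and $d(w,1)$. The only point requiring a little care is making sure the finite-dimensional factorization $g \circ P$ produced in \cite{CHL} places the peaking function in $\A_a(B_X)$ rather than merely in $\A_u(B_X)$, but this is immediate from $\A_u(B_Y) = \A_a(B_Y)$ in finite dimensions.
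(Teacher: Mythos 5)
Your proposal is correct and follows essentially the same route as the paper: finitely supported points of $S_X$ are strong peak points for $\A_a(B_X)$ by the factorization $g\circ P$ from \cite{CHL}, Theorem \ref{prop:strong_peak} lifts them to peak points for $\widehat{\A}$, and density of finitely supported unit vectors (from order continuity) together with the closedness of $\partial\widehat{\A}$ closes the chain. The only cosmetic difference is that for the upper inclusion the paper cites \cite[Theorem 3.5]{CHL} (that $S_X$ is the Shilov boundary for $\A$) and applies Proposition \ref{prop:boundary_norming}, whereas you invoke the general remark after Proposition \ref{prop:general}; both yield the same containment.
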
 

\begin{proof}
It is shown in \cite[Theorem 3.5]{CHL} that $S_X$ is the Shilov boundary for $\A$. Thus, $\partial \widehat{\A} \subseteq \overline{\{ \delta_x : x \in S_X \}}^{\, \sigma}$ by Proposition \ref{prop:boundary_norming}. As any finitely supported point $x$ in $S_X$ is a strong peak point for $\A_a (B_X)$,  we observe from Theorem \ref{prop:strong_peak} that 
\[
\{ \delta_x : x  \text{ is finitely supported in } S_X \} \subseteq  \rho \widehat{\A}. 
\]
As the set of finitely supported elements in $S_X$ forms a dense subset of $S_X$, a density argument implies that $\{ \delta_x : x \in S_X\} \subseteq  \partial \widehat{\A}$, which completes the proof. 
\end{proof}

On the other hand, it is known \cite{Glob1} that for a Banach space, every peak point for $\A_u (B_X)$ or $\A_\infty (B_X)$ is a $\mathbb{C}$-extreme point of $\overline{B}_X$. Since $\overline{B}_{c_0}$ has no $\mathbb{C}$-extreme point; hence, Theorem \ref{prop:strong_peak} cannot be applied to compute the Shilov boundary for $\widehat{\A_u (B_{c_0})}$. Nevertheless, we can describe the Shilov boundary for $\widehat{\A_u (B_{c_0})}$ by using a well known result of Littlewood-Bogdanowicz-Pelczynski \cite[Proposition 1.59]{Dineen} which asserts that any bounded $m$-homogeneous polynomial on $c_0$ can be approximated uniformly on $B_{c_0}$ by $m$-homogeneous finite type polynomials. Notice from this that $\M (\A_u (B_{c_0}))$ is actually nothing but $\{ \delta_x : x \in \overline{B}_{\ell_\infty}\}$.

 \begin{proposition}\label{prop:A_u_c_0}
The set of peak points for $\widehat{\A_u (B_{c_0})}$ and the Shilov boundary for $\widehat{\A_u (B_{c_0})}$ both coincide with ${\{ \delta_z : z \in \mathbb{T}^{\mathbb{N}} \}}$. 
 \end{proposition}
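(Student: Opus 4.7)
My plan is to use the identification $\M(\A_u(B_{c_0})) = \{\delta_x : x \in \overline{B}_{\ell_\infty}\}$ noted just before the statement (a consequence of the Littlewood--Bogdanowicz--Pe\l{}czy\'nski density of finite-type polynomials in $\A_u(B_{c_0})$). Under this identification, the proposition amounts to showing $\partial\widehat{\A_u(B_{c_0})} = \rho\widehat{\A_u(B_{c_0})} = \{\delta_z : z \in \mathbb{T}^{\mathbb{N}}\}$, which I would do via two independent containments: $\partial\widehat{\A_u(B_{c_0})} \subseteq \{\delta_z : z \in \mathbb{T}^{\mathbb{N}}\}$ via Proposition~\ref{prop:boundary_norming}, and $\{\delta_z : z \in \mathbb{T}^{\mathbb{N}}\} \subseteq \rho\widehat{\A_u(B_{c_0})}$ by exhibiting explicit peak functions.

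\textbf{Upper containment.} First I would verify that $\mathbb{T}^{\mathbb{N}}$ is a boundary for $\widetilde{\A_u(B_{c_0})}$: every finite-type polynomial depends on finitely many coordinates, so by the maximum modulus principle on a polydisc $\|P\|_{B_{c_0}}$ equals the supremum of $|\widetilde{P}|$ over $\mathbb{T}^{\mathbb{N}}$; density of finite-type polynomials in $\A_u(B_{c_0})$ combined with the isometry of the Aron--Berner extension extends this to every $f \in \A_u(B_{c_0})$. Next I would check that $\{\delta_z : z \in \mathbb{T}^{\mathbb{N}}\}$ is closed in $\M(\A_u(B_{c_0}))$: the map $z \mapsto \delta_z$ from $(\overline{B}_{\ell_\infty}, w^*)$ to $(\M(\A_u(B_{c_0})), \sigma)$ is continuous (each $\widetilde{f}$ is a uniform limit of weak-$*$ continuous finite-type polynomials on the weak-$*$ compact $\overline{B}_{\ell_\infty}$) and injective, and $\mathbb{T}^{\mathbb{N}}$ is weak-$*$ compact in $\overline{B}_{\ell_\infty}$. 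Proposition~\ref{prop:boundary_norming} then yields the desired containment.

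\textbf{Peak functions.} For the reverse direction, given $z = (z_n) \in \mathbb{T}^{\mathbb{N}}$, my candidate is $f = f_1\, f_2$ where
\[
f_1(x) = \sum_{n=1}^{\infty} 2^{-n}\, \overline{z_n}\, x_n \qquad \text{and} \qquad f_2(x) = \tfrac{1}{2}\,\overline{z_1}\, x_1 + \tfrac{1}{2}\,\overline{z_1}\,\overline{z_2}\, x_1 x_2.
\]
Both factors lie in $\A_u(B_{c_0})$ (one is a bounded linear functional, the other a finite-type polynomial), so $f \in \A_u(B_{c_0})$, and a direct evaluation gives $\widetilde{f}(z) = 1$. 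For $w \in \overline{B}_{\ell_\infty}\setminus\{z\}$, if $|\widetilde{f_1}(w)| < 1$ then $|\widetilde{f}(w)| < 1$ since $\|f_2\|=1$; otherwise equality in the triangle inequality applied to $\widetilde{f_1}(w) = \sum_{n} 2^{-n}(\overline{z_n}w_n)$ forces each $\overline{z_n}w_n$ to be a common unimodular constant $e^{i\theta}$, so $w = e^{i\theta}z$ with $\theta \not\equiv 0\pmod{2\pi}$, and then $\widetilde{f_2}(w) = \tfrac{1}{2}(e^{i\theta}+e^{2i\theta})$ has modulus $|\cos(\theta/2)|<1$. Hence $\widehat{f}$ peaks at $\delta_z$, giving $\delta_z \in \rho\widehat{\A_u(B_{c_0})}$.

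\textbf{Main obstacle.} The delicate step is the peak function construction in the previous paragraph: a purely linear functional $f_1$ unavoidably attains modulus one along the entire complex orbit $\{e^{i\theta}z : \theta \in \mathbb{R}\}$, so by itself it cannot isolate $z$ in $\overline{B}_{\ell_\infty}$. The trick is to combine $f_1$ with a small nonlinear factor $f_2$ satisfying $\widetilde{f_2}(z)=1$ that strictly breaks rotational symmetry inside this orbit; arranging that the two unimodular sets intersect only at $z$ is the geometric heart of the argument. The remainder is routine: peak points of a uniform algebra on a compact space are strong boundary points and therefore lie in every closed boundary, yielding $\{\delta_z : z \in \mathbb{T}^{\mathbb{N}}\} \subseteq \rho \widehat{\A_u(B_{c_0})} \subseteq \partial\widehat{\A_u(B_{c_0})}$, and combining with the upper containment gives the three equalities.
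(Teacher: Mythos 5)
Your argument is correct, and for the substantive half of the proof it takes a genuinely different route from the paper. For the upper containment both you and the paper invoke Proposition~\ref{prop:boundary_norming} together with the closedness of $\{\delta_z : z\in\mathbb{T}^{\mathbb{N}}\}$; the paper simply cites \cite[Theorem 2]{ACLP} for the fact that $\mathbb{T}^{\mathbb{N}}$ is a boundary, whereas you reprove it via the polydisc maximum modulus principle. One small inaccuracy there: a finite-type polynomial built from general elements of $\ell_1$ need \emph{not} depend on finitely many coordinates; you should first reduce to finitely supported functionals (dense in $\ell_1$) before applying the polydisc argument. The real divergence is in the lower containment. The paper quotes \cite[Proposition 3.6]{ADLM} for a function of the form $f=\tfrac{1}{2}(1+x^*)$ with $x^*\in\ell_1$ that peaks at $z_0$ over $\overline{B}_{\ell_\infty}$, then runs an abstract argument: $\widehat f$ attains its norm at some strong boundary point $\phi$ by \cite[Proposition 4.3.4]{Dales}, the identification $\M(\A_u(B_{c_0}))=\{\delta_z : z\in\overline{B}_{\ell_\infty}\}$ forces $\phi=\delta_{z_0}$, and metrizability of the spectrum upgrades the strong boundary point to a peak point via \cite[Corollary 4.3.7]{Dales}. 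You instead exhibit an explicit peak function $f_1f_2$ and verify directly that $\widehat{f_1f_2}$ peaks at $\delta_z$ (using the same identification of the spectrum), which I checked: the equality case of the triangle inequality confines $w$ to the circle $e^{i\theta}z$ and $|\widetilde{f_2}(e^{i\theta}z)|=|\cos(\theta/2)|<1$ for $\theta\not\equiv 0$. Your route is more self-contained and avoids both the \cite{ADLM} citation and the two results from \cite{Dales}; its only cost is the slightly baroque symmetry-breaking factor $f_2$ --- note that averaging with the constant function, i.e.\ $\tfrac{1}{2}(1+f_1)$, already breaks the rotational invariance of $|\widetilde{f_1}|$ and peaks at $z$ on its own, which is exactly the function the paper borrows. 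The paper's abstract argument, on the other hand, generalizes verbatim to situations (such as Proposition~\ref{prop:C(K)}) where an explicit peak function over the whole bidual ball is harder to write down.
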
 
 
 \begin{proof}
It is shown in \cite[Theorem 2]{ACLP} that $\mathbb{T}^{\mathbb{N}}$ is a closed boundary for $\A_\infty (B_{\ell_\infty})$; hence $\{\delta_z : z \in \mathbb{T}^{\mathbb{N}} \}$ is a boundary for $\widehat{\A_u}$. By Proposition \ref{prop:boundary_norming}, it follows that $\partial \widehat{\A_u}$ is contained in $\overline{\{ \delta_z : z \in \mathbb{T}^{\mathbb{N}} \}}^{\, \sigma}$. 


On the other hand, if $z_0 \in \mathbb{T}^{\mathbb{N}}$, then there is $x^* \in \ell_1$ such that $f = \frac{1+x^*}{2}$ satisfies that $ \widetilde{f} (z_0) = 1$ and $| \widetilde{f} (w)| < 1$ whenever $w \in \overline{B}_{\ell_\infty}$ is different from $z_0$, where $ \widetilde{f}$ is the Aron-Berner extension of $f$ (see \cite[Proposition 3.6]{ADLM}). Notice that $\widehat{f}$ attains its norm at a strong boundary point $\phi$ in $\M (\A)$ \cite[Proposition 4.3.4]{Dales}. Since $\phi (f) =  \widetilde{f} (\pi (\phi))$, we obtain that $\pi (\phi)$ must be $z_0$. 
The fact that $\M (\A_u (B_{c_0}))$ coincides with $\{ \delta_z : z \in \overline{B}_{\ell_\infty}\}$ implies that $\phi$ must be $\delta_{z_0}$; hence $\delta_{z_0}$ is a strong boundary point. Since $\ell_1$ is separable, the spectrum $\M (\A_u (B_{c_0}))$ is compact metrizable which implies that $\delta_{z_0}$ is indeed a peak point for $\widehat{\A_u (B_{c_0})}$ \cite[Corollary 4.3.7]{Dales}. It follows that 
\[
\{ \delta_z : z \in \mathbb{T}^{\mathbb{N}} \} \subseteq \rho \widehat{\A_u} \subseteq \partial \widehat{\A_u} \subseteq \overline{\{ \delta_z : z \in \mathbb{T}^{\mathbb{N}} \}}^{\, \sigma}. 
\]

To complete the proof, we check that $\{ \delta_z : z \in \mathbb{T}^{\mathbb{N}} \}$ is actually closed in $\M (\A_u (B_{c_0}))$. Indeed, if a net $(z_\alpha)$ in $\mathbb{T}^\N$ satisfies that $(\delta_{z_\alpha})$ converges to some $\phi \in \M(\A_u (B_{c_0}))$, then $\phi = \delta_z$ for some $z$ in $\overline{B}_{\ell_\infty}$.  Since $(z_\alpha)$ converges weak-star to $z$, we see that $z$ belongs to $\mathbb{T}^\N$. 
 \end{proof}


By arguing similarly as in the proof of Proposition \ref{prop:A_u_c_0}, we can obtain the description of the Shilov boundary for $\widehat{\A_u (B_{C(K)})}$ for certain compact Hausdorff space $K$. 
It is worth to remark that Choi et al. \cite{CGKM} first showed that there is no minimal closed boundary for $\A_u (B_{C(K)})$ when $K$ is infinite and scattered. Afterwards, it is generalized by Acosta \cite{Acosta} that if $K$ is an arbitrary infinite compact Hausdorff space, then $\A_u (B_{C(K)})$ or $\A_\infty (B_{C(K)})$ has no minimal closed boundary. Moreover, Acosta and Louren\c co observed in \cite{AL} that there are no strong peak points for $\A_u (B_{C(K)})$ if $K$ is infinite, and the set of peak points for $\A_u (B_{C(K)})$ is the set of extreme points of $B_{C(K)}$ if $K$ is separable.

Note that a compact Hausdorff space $K$ is countable if and only if $K$ is scattered and metrizable (see, for instance, \cite[Lemma 14.21]{FHHMPZ}). 
We present a description of the Shilov boundary for $\widehat{\A_u (B_{C(K)})}$ for an infinite countable compact Hausdorff space $K$. 
To this end, let us recall the following lemma which is already well known to experts. For the proof, we refer \cite{Aron} or \cite{Gamelin}. As usual, we denote by $\Pol(^N X)$ the Banach space of all $N$-homogenous polynomials on a Banach space $X$ and $\Pol_{wu}(^N X)$ the subspace of $\Pol(^N X)$ of all elements which are weakly uniformly continuous on bounded subsets of $X$. 

\begin{lemma}\label{lem:weak_unif_conti}
Let $X$ be a Banach space and $f \in \A_u (B_X)$. If $\sum_N P_N$ is the Taylor series expansion of $f$ at $0$, then $f \in \A_{wu} (B_X)$ if and only if $P_N \in \mathcal{P}_{wu}(^N X)$ for each $N \in \mathbb{N}$. 
\end{lemma}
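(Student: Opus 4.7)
The plan is to base both implications on the Cauchy integral representation
$$P_N(x) \;=\; \frac{1}{2\pi}\int_0^{2\pi} e^{-iN\theta}\, f(e^{i\theta}x)\, d\theta, \qquad x \in \overline{B}_X,$$
which is valid for any $f \in \A_u(B_X)$ by continuous extension to $\overline{B}_X$. The key feature I would exploit is that the rotation $x \mapsto e^{i\theta}x$ preserves every seminorm $|\phi(\cdot)|$ with $\phi \in X^*$.

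For the forward direction, fix $N$ and $\eps>0$ and let $\phi_1,\ldots,\phi_k \in X^*$ and $\delta>0$ be witnesses of the weak uniform continuity of $f$ on $\overline{B}_X$ against $\eps$. The rotation-invariance just noted lets me transfer those witnesses directly inside the integral to deduce $|P_N(x)-P_N(y)|<\eps$ whenever $x,y\in\overline{B}_X$ satisfy $\max_j|\phi_j(x-y)|<\delta$. Hence $P_N$ is weakly uniformly continuous on $\overline{B}_X$, and its $N$-homogeneity upgrades this to weak uniform continuity on every bounded subset of $X$, i.e.\ $P_N \in \Pol_{wu}(^N X)$.

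For the backward direction, the strategy is to approximate $f$ uniformly on $\overline{B}_X$ by functions that are manifestly weakly uniformly continuous. The natural candidates are the dilations $f_r(x):=f(rx)=\sum_N r^N P_N(x)$ for $0<r<1$. Cauchy estimates give $\|P_N\|_{\overline{B}_X}\le\|f\|$, so the series converges absolutely and uniformly on $\overline{B}_X$, presenting $f_r$ as a uniform limit on $\overline{B}_X$ of finite sums of elements of $\Pol_{wu}$. A standard $\eps/3$ argument then yields $f_r \in \A_{wu}(B_X)$ for every such $r$. Finally, the uniform continuity of $f$ on $B_X$ forces $f_r \to f$ uniformly on $\overline{B}_X$ as $r \to 1^-$, and one more $\eps/3$ step concludes $f \in \A_{wu}(B_X)$.

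The only step in which I would need to be careful is the stability of weak uniform continuity under uniform limits on a fixed bounded set: the key is that the witnesses $\phi_j$ and $\delta$ depend only on $\eps$ (and on the bounded set), not on the index of the approximating sequence, so the usual interpolation goes through unchanged. Beyond this quantifier bookkeeping, everything reduces to a routine assembly of the Cauchy integral identity, the Cauchy estimate $\|P_N\|\le\|f\|$, and the dilation trick $f_r \to f$ classical for $\A_u(B_X)$.
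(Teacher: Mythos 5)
Your proof is correct. The paper offers no proof of this lemma at all --- it states it as ``well known to experts'' and defers to \cite{Aron} and \cite{Gamelin} --- and your argument (the Cauchy integral formula together with the rotation-invariance of the seminorms $|\phi_j(\cdot)|$ for the forward direction; Cauchy estimates, the dilations $f_r$, and the stability of weak uniform continuity under uniform limits on a fixed bounded set for the converse) is precisely the standard one found in those references, so there is nothing substantive to contrast.
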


\begin{proposition}\label{prop:AP_wu_spectrum}
Let $X$ be a Banach space whose dual has the approximation property. Then $\Pol (^N X) = \Pol_{wu} (^N X)$ for every $N \in \N$ if and only if $\M (\A_u (B_X)) = \{ \delta_z : z \in \overline{B}_{X^{**}}\}$. 
\end{proposition}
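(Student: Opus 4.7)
The plan is to prove both implications by relating the spectrum of $\A_u(B_X)$ to the behaviour of polynomials on $X$, using the classical density result that when $X^*$ has the approximation property, every weakly uniformly continuous $N$-homogeneous polynomial on $X$ can be approximated in the sup norm on $B_X$ by finite type polynomials (i.e., finite linear combinations of products of $N$ elements of $X^*$).

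For the forward direction, assume $\Pol(^N X) = \Pol_{wu}(^N X)$ for every $N$. Lemma~\ref{lem:weak_unif_conti} then gives $\A_u(B_X) = \A_{wu}(B_X)$. Fix $\phi \in \M(\A_u(B_X))$ and set $z = \pi(\phi) = \phi|_{X^*} \in \overline{B}_{X^{**}}$; the aim is to show $\phi = \delta_z$. For a finite type polynomial $P(x) = \sum_k \prod_{j=1}^N x_{k,j}^*(x)$, multiplicativity of $\phi$ directly yields
\[
\phi(P) = \sum_k \prod_{j=1}^N \phi(x_{k,j}^*) = \sum_k \prod_{j=1}^N z(x_{k,j}^*) = \widetilde{P}(z),
\]
since the Aron-Berner extension of a finite type polynomial is given by the same formula on $X^{**}$. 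Because $X^*$ has the approximation property, finite type $N$-homogeneous polynomials are dense in $\Pol_{wu}(^N X)$, which by hypothesis equals $\Pol(^N X)$; continuity of $\phi$ and of $\delta_z$ therefore gives $\phi(P) = \widetilde{P}(z)$ for every homogeneous polynomial $P$. To lift this to $\A_u(B_X)$, take $f \in \A_u(B_X)$ with Taylor series $f = \sum_N P_N$ at $0$ and dilate: $f_r(x) := f(rx)$ has Taylor series $\sum_N r^N P_N$ converging uniformly on $B_X$ when $0 < r < 1$, so $\phi(f_r) = \sum_N r^N \phi(P_N) = \sum_N r^N \widetilde{P_N}(z) = \widetilde{f}(rz)$. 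Uniform continuity of $f$ on $B_X$ ensures $f_r \to f$ in the sup norm, and uniform continuity of $\widetilde{f}$ on $\overline{B}_{X^{**}}$ ensures $\widetilde{f}(rz) \to \widetilde{f}(z)$ as $r \to 1^-$, yielding $\phi(f) = \widetilde{f}(z) = \delta_z(f)$.

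For the converse, I argue by contraposition: suppose some $P \in \Pol(^N X) \setminus \Pol_{wu}(^N X)$. Failure of weak uniform continuity on $B_X$, combined with the fact that $\widetilde{P}$ is weak-star continuous on $\overline{B}_{X^{**}}$ precisely when $P \in \Pol_{wu}(^N X)$, supplies a net $(x_\alpha)$ in $B_X$ converging weak-star to some $z \in \overline{B}_{X^{**}}$ such that $P(x_\alpha) \not\to \widetilde{P}(z)$; after passing to a subnet, one may assume $P(x_\alpha) \to c \neq \widetilde{P}(z)$. Fix an ultrafilter $\mathcal{U}$ refining this subnet and define $\phi : \A_u(B_X) \to \mathbb{C}$ by $\phi(f) = \lim_{\mathcal{U}} f(x_\alpha)$; boundedness of each $f$ forces the limit to exist, and pointwise products turn into products of ultrafilter limits, so $\phi$ is a unital algebra homomorphism, i.e., $\phi \in \M(\A_u(B_X))$. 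Its restriction to $X^*$ equals $z$ by weak-star convergence, so if $\phi$ were a point evaluation it would have to be $\delta_z$; but $\phi(P) = c \neq \widetilde{P}(z) = \delta_z(P)$, a contradiction.

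The main obstacle I expect is the approximation input used in the forward direction: the fact that the approximation property of $X^*$ forces every weakly uniformly continuous homogeneous polynomial to be a sup-norm limit of finite type polynomials is what makes the hypothesis essential, and the parallel characterisation of $\Pol_{wu}(^N X)$ via weak-star continuity of the Aron-Berner extension is what powers the contradiction in the converse. Both are standard tools in infinite dimensional holomorphy, and once they are invoked the remaining work is bookkeeping: verifying that the dilation $f_r$ gives a uniform approximation of $f$ in $\A_u(B_X)$, and that the ultrafilter construction produces a genuine element of $\M(\A_u(B_X))$ that fails to be a point evaluation.
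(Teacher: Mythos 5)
Your proposal is correct and follows essentially the same route as the paper: the forward direction rests on the same two inputs (Lemma~\ref{lem:weak_unif_conti} plus density of finite type polynomials in $\Pol_{wu}(^N X)$ when $X^*$ has the approximation property, together with multiplicativity of $\phi$ on finite type polynomials), and your converse is just the contrapositive of the paper's argument, with your ultrafilter limit of $(\delta_{x_\alpha})$ playing the role of the paper's convergent subnet in the compact space $\M(\A_u(B_X))$. The only loose point is how you extract the net in $B_X$ witnessing $P(x_\alpha)\not\to\widetilde{P}(z)$; the cleanest justification is the one the paper uses, namely that by \cite[Theorem 2.9]{AHV} failure of weak uniform continuity already gives a bounded net in $X$ converging weakly to a point of $X$ at which $P$ is weakly discontinuous.
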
 

\begin{proof}
We first prove the `only if' part. Let $\phi \in \M(\A_u (B_X))$, $f \in \A_u (B_X)$ and $\eps >0$ be given. Using Lemma \ref{lem:weak_unif_conti}, we can find a continuous polynomial $P$ on $X$ which is weakly uniformly continuous on bounded sets such that $\| f - P \| < \frac{\eps}{4}$. Since $X^*$ has the approximation property, notice from \cite[Proposition 2.8]{Dineen} that there exists a finite type polynomial $R$ such that $\| P - R \| < \frac{\eps}{4}$. Thus, we have that 
\[
|\phi(f) - \delta_{\pi(\phi)} (f) | \leq |\phi (f) - \phi (R)| + |  \widetilde{R} (\pi (\phi)) -  \widetilde{f}(\pi(\phi))| < \frac{\eps}{2} + \frac{\eps}{2} = \eps. 
\]
Since $\eps >0$ was arbitrary, we conclude that $\phi(f) = \delta_{\pi(\phi)} (f)$, so $\phi = \delta_{\pi (\phi)}$.  

Conversely, suppose that $\M (\A_u (B_X)) = \{ \delta_z : z \in \overline{B}_{X^{**}}\}$. Pick $P \in \Pol (^N X)$ and a bounded net $(x_\alpha)$ in $X$ converging weakly to some $x_0 \in X$. By homogeneity of $P$, we may assume that $\|x_\alpha \| \leq 1$ for every $\alpha$. We claim that $(P(x_\alpha))$ converges to $\widetilde{P}(z)$, where $\widetilde{P}$ is the Aron-Berner extension of $P$. If not, there exist $\eps >0$ and a subnet $(x_\beta)$ such that $|P(x_\beta)-\widetilde{P}(z)| \geq \eps$ for every $\beta$. Passing to a subnet, we may assume that $(\delta_{x_\beta})$ converges to some $\phi \in \M ( \A_u (B_X))$. Indeed, $\phi$ is $\delta_{z_0}$ for some $z_0 \in \overline{B}_{X^{**}}$ by our assumption and this implies that $(x_\beta)$ converges weak-star to $z_0$. Therefore, $z_0 = x_0$ and $\widetilde{P}(x_0)=\delta_{x_0} (P)=\lim_\beta \delta_{x_\beta} (P) =\lim_\beta P(x_\beta)$. This contradicts to that $|P(x_\beta)-\widetilde{P}(z)| \geq \eps$ for every $\beta$; hence we conclude that $(P(x_\alpha))$ converges to $\widetilde{P}(z)$. Thus, $P$ is weakly continuous on bounded subsets on $X$; hence $P \in \Pol_{wu}(^N X)$ \cite[Theorem 2.9]{AHV}. 
\end{proof}

\begin{proposition}\label{prop:C(K)}
Let $K$ be an infinite countable compact Hausdorff space. Then the set of peak points for $\widehat{\A_u (B_{C(K)})}$ and the Shilov boundary for $\widehat{\A_u (B_{C(K)})}$ both coincide with ${\{ \delta_u : u \in C(K)^{**}, |u(t)|=1, \, \forall t \in K\}}$. 
\end{proposition}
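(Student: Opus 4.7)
The strategy closely parallels the proof of Proposition \ref{prop:A_u_c_0}, exploiting the structural similarity between $c_0$ and $C(K)$ for countable compact $K$. I would organize the argument into four steps.

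\textbf{Step 1: Spectrum identification.} First I would show that $\M(\A_u (B_{C(K)})) = \{\delta_z : z \in \overline{B}_{C(K)^{**}}\}$ by applying Proposition \ref{prop:AP_wu_spectrum}. Since $K$ is countable and scattered, $C(K)^* \cong \ell_1(K) \cong \ell_1$ has the approximation property (indeed, a Schauder basis). It then remains to verify that $\Pol(^N C(K)) = \Pol_{wu}(^N C(K))$ for every $N \in \N$, which is the analog of the Littlewood--Bogdanowicz--Pelczynski approximation theorem used in Proposition \ref{prop:A_u_c_0}, and follows from the fact that $C(K)$ is Asplund (hence contains no copy of $\ell_1$) together with standard polynomial approximation results for scattered $C(K)$-spaces.

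\textbf{Step 2: Upper bound via boundary.} Identifying $C(K)^{**}$ with $\ell_\infty(K)$ and fixing an enumeration $K = \{t_n : n \in \N\}$, we have $\ell_\infty(K) \cong \ell_\infty$ isometrically, and $\{u \in C(K)^{**} : |u(t)|=1, \,\forall t \in K\}$ corresponds to $\mathbb{T}^\N$ under this identification. For any $f \in \A_u(B_{C(K)})$, the Aron--Berner extension $\widetilde{f}$ belongs to $\A_u(B_{C(K)^{**}}) \subseteq \A_\infty(B_{C(K)^{**}})$, so \cite[Theorem~2]{ACLP} (applied in $\ell_\infty$) yields $\|\widetilde{f}\| = \sup\{|\widetilde{f}(u)| : u \in \mathbb{T}^K\}$. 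Thus $\mathbb{T}^K$ is a boundary for $\widetilde{\A_u(B_{C(K)})}$, and Proposition \ref{prop:boundary_norming} gives $\partial \widehat{\A_u} \subseteq \overline{\{\delta_u : u \in \mathbb{T}^K\}}^{\,\sigma}$. The set $\{\delta_u : u \in \mathbb{T}^K\}$ is moreover closed in $\M(\A_u)$: if $\delta_{u_\alpha} \to \phi$ in the Gelfand topology with $u_\alpha \in \mathbb{T}^K$, then $\phi = \delta_v$ for some $v \in \overline{B}_{C(K)^{**}}$ by Step 1, and pairing against $\delta_t \in C(K)^*$ for each $t \in K$ gives $u_\alpha(t) \to v(t)$, so $|v(t)| = 1$ for all $t \in K$.

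\textbf{Step 3: Peak point construction.} For each $u \in \mathbb{T}^K$, I would construct a peaking function analogous to \cite[Proposition 3.6]{ADLM}. Pick positive weights $(\alpha_n)$ with $\sum_n \alpha_n = 1$ and set
\[
\mu = \sum_{n=1}^\infty \alpha_n \overline{u(t_n)}\, \delta_{t_n} \in \ell_1(K) = C(K)^*,
\]
so that $\|\mu\|=1$. Then $f := \tfrac{1+\mu}{2} \in \A_a(B_{C(K)})$ has Aron--Berner extension $\widetilde{f}(v) = \tfrac{1}{2}(1+\langle \mu, v\rangle)$. Clearly $\widetilde{f}(u) = 1$; conversely, if $v \in \overline{B}_{C(K)^{**}}$ satisfies $|\widetilde{f}(v)| = 1$, then $\sum_n \alpha_n \overline{u(t_n)}\,v(t_n) = \sum_n \alpha_n$, and the strict triangle inequality (using $|v(t_n)| \leq 1$, $|u(t_n)| = 1$, and $\alpha_n > 0$) forces $v(t_n) = u(t_n)$ for every $n$, i.e., $v = u$ in $\ell_\infty(K)$. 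By the spectrum identification of Step 1, $\widehat{f}$ peaks at $\delta_u$, so $\delta_u \in \rho\widehat{\A_u}$. Combined with Step 2, this closes the chain of inclusions and yields the proposition.

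\textbf{Main obstacle.} The principal technical hurdle is the spectrum identification in Step 1, which reduces to verifying the polynomial approximation property $\Pol(^N C(K)) = \Pol_{wu}(^N C(K))$; the remaining ingredients (the boundary result in $\ell_\infty$ and the peaking construction) transfer essentially verbatim from the $c_0$ case once the spectrum is described as a set of point evaluations. Slight care is also needed in Step 3 to ensure the extended functional $\mu$ pairs with $v \in C(K)^{**}$ via the coordinate-wise formula, but this is immediate from $\mu \in \ell_1(K)$ and $C(K)^{**} = \ell_\infty(K)$.
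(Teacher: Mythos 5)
Your proposal is correct and follows the paper's skeleton (spectrum identification via Proposition \ref{prop:AP_wu_spectrum}, an explicit affine peaking functional built from the Dirac measures $\delta_{t_n}$, and the upper bound via Proposition \ref{prop:boundary_norming}), but it diverges from the paper in two places worth noting. For the upper bound, the paper does not pass to $\ell_\infty(K)$ and \cite[Theorem 2]{ACLP} as you do; it instead invokes Acosta's result \cite[Theorem 4.2]{Acosta} that the extreme points of $B_{C(K)}$ already form a boundary for $\A_u(B_{C(K)})$, and then checks that the $\sigma$-closure of $\{\delta_u : u \in C(K),\ |u(t)|=1\ \forall t\}$ lands inside the target set. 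Your route buys a boundary sitting directly in the bidual (so the closedness argument is cleaner), at the cost of needing that the Aron--Berner extension maps $\A_u(B_{C(K)})$ into $\A_\infty(B_{C(K)^{**}})$ --- a standard fact the paper also uses implicitly in Proposition \ref{prop:A_u_c_0}, so this is fine. For the peak-point step, your direct argument (every $\phi\in\M$ is a $\delta_v$, and $|\widetilde{f}(v)|<1$ for $v\neq u$) is actually shorter than the paper's, which detours through \cite[Proposition 4.3.4]{Dales} and metrizability of the spectrum; both are valid. The one point where you should tighten the writing is Step 1: the identity $\Pol(^N C(K))=\Pol_{wu}(^N C(K))$ does not follow from the absence of a copy of $\ell_1$ alone --- the relevant result, \cite[Corollary 2.37]{Dineen}, also requires the Dunford--Pettis property of $C(K)$, which is the ingredient the paper cites explicitly. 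Since $C(K)$ does have the Dunford--Pettis property, this is an imprecision rather than a gap, but as stated your justification of the "main obstacle" is incomplete.
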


\begin{proof}
Recall that $C(S)$ has the Dunford-Pettis property for any compact Hausdorff space $S$ \cite[Theorem 5.4.5]{Albiac-Kalton}. For an infinite compact Hausdorff scattered space $K$, it is known \cite{PS} that $C(K)$ does not have an isomorphic copy of $\ell_1$. It follows from \cite[Corollary 2.37]{Dineen} that $\Pol(^N C(K)) = \Pol_{wu} (^N C(K))$ for every $N \in \N$. On the other hand, as $K$ is scattered, $C(K)^*$ is isometric to $\ell_1 (K)$, which has the approximation property (see, for instance, \cite[Theorem 14.24]{FHHMPZ}). Thus, we can apply Proposition \ref{prop:AP_wu_spectrum} to conclude that $\M (\A_u (B_{C(K)}))= \{ \delta_z : z \in \overline{B}_{C(K)^{**}}\}$. 
Let us write $K= \{ k_n : n \in \N\}$, and let $u \in C(K)^{**}$ be given so that $|u(t)|=1$ for every $t \in K$. Consider $T \in  \A_u (B_{C(K)})$ given by 
\[
T(v) = \frac{1}{2}\left(1 + \sum_{n=1}^\infty \frac{\overline{u(k_n)}}{2^n} v(k_n)\right)
\]
for each $v \in C(K)$. Notice that $|\widetilde{T} (z)| < 1$ for every $z \in \overline{B}_{C(K)^{**}} = \overline{B}_{\ell_\infty (K)}$ which is different from $u$. 
As a matter of fact, if $| \widetilde{T}(z)| = 1$ for some $z \in \overline{B}_{C(K)^{**}}$, then $z(k_n) = u(k_n)$ for every $n \in \N$; hence $z=u$. Note from \cite[Proposition 4.3.4]{Dales} that there exists a strong boundary point $\phi$ for $\widehat{\A_u (B_{C(K)})}$ in $\M(\A_u (B_{C(K)}))$ so that $|\widehat{T} (\phi)| = 1$. As observed, $\phi$ must be $\delta_u$ and it follows that $\delta_u=\phi$ is a strong boundary point. As $\M (\A_u (B_{C(K)}))$ is compact metrizable, $\delta_u$ is then a peak point. Hence, 
\[
\{ \delta_u : u \in C(K)^{**}, |u(t)|=1, \, \forall t \in K \} \subseteq \rho \widehat{\A_u (B_{C(K)})}.
\]

Conversely, by a result of M.D. Acosta \cite[Theorem 4.2]{Acosta}, the set of extreme points in the unit ball of $C(K)$ is a boundary for $\A_u (B_{C(K)})$. Thus, we have that 
\begin{align*}
\partial \widehat{\A_u (B_{C(K)})} &\subseteq \overline{\{ \delta_u : u \in C(K), |u(t)|=1, \, \forall t \in K\}}^{\, \sigma}
\end{align*} 
from Proposition \ref{prop:boundary_norming} and the fact that the set of extreme points in $B_{C(K)}$ coincides with $\{ u \in C(K) : |u(t)|=1, \, \forall t \in K\}$. Because of the following inclusion 
\begin{align*}
\overline{\{ \delta_u : u \in C(K), |u(t)|=1, \, \forall t \in K\}}^{\, \sigma} \subseteq \{ \delta_u : u \in C(K)^{**}, |u(t)|=1, \, \forall t \in K\},
\end{align*}
we complete the proof.
\end{proof}


The last example in this section concerns the Banach space $C_1 (H)$ of trace class operators on a complex Hilbert space $H$. Recall that an operator $T$ on $H$ is called a \emph{trace class operator} if there are orthonormal sequences $(e_n)$ and $(f_n)$ in $H$ such that $T x = \sum_{n=1}^\infty \lambda_n \langle x, e_n \rangle f_n$ for every $x \in H$ and $(\lambda_n)$ in $\ell_1$. In this case, the operator norm of $T$ is given by $\| T\| =\sum_{n=1}^\infty |\lambda_n|$.
For an orthonormal basis $\{ e_i : i \in I\}$ of $H$ and $F \subset I$, let us denote by $\Pi_F$ the norm one projection on $C_1(H)$ given by 
\[
\Pi_F (T) = P_F \circ T \circ P_F
\]
where $P_F (x) = \sum_{i \in F} \langle x, e_i \rangle e_i$ for every $x \in H$. 
It is observed in \cite[Theorem 4.1]{AcoLou} that the Shilov boundaries for $\A_u (B_{C_1 (H)})$ and $\A_\infty (B_{C_1 (H)})$ both exist and coincide. 
From a careful analysis of the proof of \cite[Theorem 4.1]{AcoLou}, we describe the Shilov boundary for $\widehat{\A}$ for $\A = \A_u (B_{C_1 (H)})$ or $\A_\infty (B_{C_1 (H)})$ as follows. 

\begin{proposition}
Let $H$ be a complex Hilbert space and let $\A = \A_u (B_{C_1 (H)})$ or $\A_\infty (B_{C_1 (H)})$. Then 
\[
\{ \delta_T : \, T \in \Gamma \} \subseteq \rho \widehat{\A} \subseteq \partial \widehat{\A} = \overline{\{\delta_T : T \in \partial \A  \}}^{\, \sigma},
\]
where
\begin{align*}\label{Pi_F_boundary}
\Gamma := \{ T \in B_{\Pi_F (C_1 (H))} : \, &T \text{ is a peak point for } \A_\infty (B_{\Pi_F (C_1 (H))}), \, F \text { is a finite subset of } I\}.
\end{align*} 
In particular, $\partial \widehat{\A} =\overline{\{\delta_T : T \in \partial \A  \}}^{\, \sigma}$. 
\end{proposition}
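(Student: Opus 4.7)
The proposition has three assertions to establish: the peak-point inclusion $\{\delta_T : T \in \Gamma\} \subseteq \rho\widehat{\A}$, the standard fact $\rho\widehat{\A} \subseteq \partial\widehat{\A}$, and the Shilov-boundary equality $\partial\widehat{\A} = \overline{\{\delta_T : T \in \partial\A\}}^{\,\sigma}$. The middle inclusion is routine (peak points of a uniform algebra lie in every closed boundary), so two substantive tasks remain.

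For the easier half of the equality, $\partial\widehat{\A} \subseteq \overline{\{\delta_T : T \in \partial\A\}}^{\,\sigma}$, I would invoke \cite[Theorem 4.1]{AcoLou}, which guarantees that $\partial\A$ exists and is a boundary for $\A$; since $\|f\| = \|\widetilde{f}\|$ for every $f \in \A$, the set $\partial\A$ is also a boundary for $\widetilde{\A}$ inside $\overline{B}_{C_1(H)^{**}}$, and Proposition \ref{prop:boundary_norming} then delivers the inclusion. For the reverse, I would use the norm-density of $\Gamma$ in $\partial\A$ (which is implicit in the AcoLou characterization): given $T \in \partial\A$ and $T_n \in \Gamma$ with $T_n \to T$ in norm, norm-to-Gelfand continuity of the map $S \mapsto \delta_S$ gives $\delta_{T_n} \to \delta_T$; combined with $\{\delta_T : T \in \Gamma\} \subseteq \rho\widehat{\A} \subseteq \partial\widehat{\A}$ and the $\sigma$-closedness of $\partial\widehat{\A}$, this forces $\delta_T \in \partial\widehat{\A}$, and taking $\sigma$-closure closes this direction.

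The heart of the proof is the peak-point inclusion. Fix $T \in \Gamma$ lying in $\Pi_F(C_1(H))$ for some finite $F \subseteq I$. Finite-dimensionality of $\Pi_F(C_1(H))$ promotes $T$ from a peak point to a strong peak point of $\A_\infty(B_{\Pi_F(C_1(H))}) = \A_a(B_{\Pi_F(C_1(H))})$, yielding $g$ in this algebra with $g(T) = \|g\| = 1$ and $|g(S)| < 1$ off $T$ on the compact finite-dimensional ball. The natural lift $g \circ \Pi_F$ lies in $\A_a(B_{C_1(H)})$, and for $\psi \in \M(\A)$ one computes $\psi(g \circ \Pi_F) = g(\Pi_F^{**}(\pi(\psi)))$, so $|\psi(g \circ \Pi_F)| = 1$ exactly when $\Pi_F^{**}(\pi(\psi)) = T$. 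I would then adapt the auxiliary peaking construction from the proof of \cite[Theorem 4.1]{AcoLou} --- which exploits the extremal rank-one structure of peak points in the trace-class ball --- to produce a second element of $\A_a(B_{C_1(H)})$ that separates $T$ from the fiber $\bigl(T + \ker\Pi_F^{**}\bigr) \cap \overline{B}_{C_1(H)^{**}}$. Combining the two gives a strong peaking function for $\A_a(B_{C_1(H)})$ at $T$, whence Theorem \ref{prop:strong_peak}(2) yields $\delta_T \in \rho\widehat{\A}$.

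The main obstacle is precisely this last construction: lifting through $\Pi_F$ collapses the unit ball of $C_1(H)$ along $\ker\Pi_F$, and on the spectrum side the map $\psi \mapsto \Pi_F^{**}(\pi(\psi))$ identifies many characters with the single point $T$. Producing the extra separating function --- which must remain in $\A_a$, respect the Aron-Berner extension, and still strongly peak at $T$ in the full ball --- is the technical crux and depends on the specific geometry of the trace-class ball and the rank-one nature of the extremal peak points extracted in \cite{AcoLou}.
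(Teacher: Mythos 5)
Your overall architecture matches the paper's: the middle inclusion is routine, the containment $\partial\widehat{\A}\subseteq\overline{\{\delta_T : T\in\partial\A\}}^{\,\sigma}$ follows from \cite[Theorem 4.1]{AcoLou} plus Proposition \ref{prop:boundary_norming}, and the reverse containment follows from norm-density of $\Gamma$ in $\partial\A$ together with the closedness of $\partial\widehat{\A}$. Those parts are fine.

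The problem is the peak-point inclusion, which you yourself flag as ``the technical crux'' and then do not carry out: you assert that $g\circ\Pi_F$ only separates $\delta_T$ from characters $\psi$ with $\Pi_F^{**}(\pi(\psi))\neq T$, and that a second function is needed to kill the fiber $\bigl(T+\ker\Pi_F^{**}\bigr)\cap\overline{B}_{C_1(H)^{**}}$. As written, the central assertion of the proposition is therefore unproven. Moreover, the extra construction you are reaching for is not actually needed, for two reasons. First, \cite[Theorem 4.1]{AcoLou} already proves that $g\circ\Pi_F$ \emph{strongly peaks at $T$ on all of $\overline{B}_{C_1(H)}$}, not merely on $B_{\Pi_F(C_1(H))}$; the reason the fiber over $T$ inside the ball of $C_1(H)$ is trivial is the trace-norm rigidity of compressions ($\|\Pi_F(S)\|_1=\|S\|_1=1$ forces $S=\Pi_F(S)$), and this is precisely the content of the cited result --- the paper simply quotes it. Second, once you know that $g\circ\Pi_F$ is an element of $\A_a(B_{C_1(H)})$ that strongly peaks at $T$ on the ball of $C_1(H)$, Theorem \ref{prop:strong_peak}(2) applies verbatim, and its proof already disposes of the bidual fiber you are worried about: since the Aron--Berner extension of an approximable function is weak-star continuous, any $z\in\overline{B}_{C_1(H)^{**}}$ with $|\widetilde{g\circ\Pi_F}(z)|=1$ is a weak-star limit of a net in $B_{C_1(H)}$ on which $|g\circ\Pi_F|\to 1$, and strong peaking forces that net to converge in norm to $T$, so $z=T$; STEP I of Theorem \ref{prop:strong_peak} then identifies the character with $\delta_T$. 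So the correct fix is not to build a second separating function but to cite the strong-peaking statement of \cite[Theorem 4.1]{AcoLou} for the single lifted function and hand the rest to Theorem \ref{prop:strong_peak}(2), which is exactly what the paper does.
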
  

\begin{proof}
It is proved in \cite[Theorem 4.1]{AcoLou} that for a finite subset $F \subset I$, if $T$ is a peak point for $\A_\infty (B_{\Pi_F (C_1 (H))})$ with peak function $g \in \A_\infty (B_{\Pi_F (C_1 (H))})$, then $g \circ \Pi_F \in \A_\infty (B_{C_1 (H)})$ strongly peaks at $T$. 
As $\Pi_F (C_1 (H))$ is finite dimensional, $g$ actually belongs to $\A_a (B_{\Pi_F (C_1 (H))})$; hence $g \circ \Pi_F$ belongs to $\A_a (B_{C_1 (H)})$. By Theorem \ref{prop:strong_peak}, it follows that $\{ \delta_T : \, T \in \Gamma \} \subseteq \rho \widehat{\A}$. Consequently, 
\[
\{ \delta_T : \, T \in \Gamma \} \subseteq \rho \widehat{\A} \subseteq \partial \widehat{\A} 
\]
On the other hand, it is observed in the same result in \cite{AcoLou} that $\overline{\Gamma}$ is the Shilov boundary for $\A$, that is, $ \overline{\Gamma}= \partial \A$. 
So, Proposition \ref{prop:boundary_norming} yields that $\partial \widehat{\A} \subseteq \overline{\{\delta_T : T \in \partial \A \}}^{\, \sigma}$.
To see that $\partial \widehat{\A} = \overline{\{\delta_T : T \in \partial \A \}}^{\, \sigma}$, notice that if $T \in \partial \A$, then there exists a net $(T_\alpha)$ in $\Gamma$ so that $(\delta_{T_\alpha})$ converges to $\delta_T$. This shows that $\{ \delta_T : T \in \partial \A \} \subseteq \partial \widehat{\A}$. 
\end{proof}

We finish this section by giving a possible application of our result. Recall from \cite[Theorem 4.3]{ACG91} that for an infinite dimensional Banach space $X$, the closure of $S_X$ in the $H_b$-topology includes the closed unit ball $\overline{B}_{X^{**}}$, where $H_b$-topology is the weak topology on the spectrum of the algebra $H_b$ of complex-valued entire functions on $X$ determined by elements in $H_b$. This implies that, in particular, for any $z \in {B}_{X}$, there is a net $(x_\alpha)$ in $S_X$ such that $f(x_\alpha) \rightarrow \widetilde{f} (z)$ for every $f \in \A_u (B_X)$ where $\widetilde{f}$ is the Aron-Berner extension of $f$. Thus, we can observe that
\[
\overline{\{ \delta_x : x \in S_X \}}^{\, \sigma} = \overline{\{ \delta_x : x  \in B_X \}}^{\, \sigma} \text{ in } \M(\A_u (B_X)). 
\]
On the other hand, we observed that if $X$ is (1) a locally uniformly convex Banach space, (2) $\ell_1$, (3) $d(w,1)$, or (4) a locally $c$-uniformly convex order continuous sequence space, then the Shilov boundary $\partial \widehat{\A_u}$ for $\widehat{\A_u (B_X)}$ coincides with $\overline{\{\delta_x : x \in S_X \}}^{\,\sigma}$. Therefore, if $X$ is one of (1)-(4) with infinite dimension, then we have that 
\[
\partial \widehat{\A_u} = \overline{\{ \delta_x : x  \in B_X \}}^{\, \sigma}. 
\]
Recall from \cite[Section 2]{CGMS} that we say that the \emph{Corona theorem} holds for $\A_u (B_X)$  when $\M(\A_u (B_X))$ coincides with $\overline{ \{ \delta_x : x \in B_X\}}^{\,\sigma}$. Consequently, we have the following observation which shows the the Corona problem for $\A_u (B_X)$ can be restated in terms of the Shilov boundary for $\widehat{\A_u (B_X)}$ in some cases.  

\begin{theorem}
Let $X$ be one of the above (1)-(4) Banach spaces. Then the Corona theorem for $\A_u (B_X)$ is equivalent to that $\M (\A_u (B_X))$ coincides with its Shilov boundary $\partial \widehat{\A_u}$. 
\end{theorem}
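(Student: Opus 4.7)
The plan is essentially to assemble two ingredients already established in the paragraph preceding the theorem statement and then apply the definition of the Corona theorem. The first ingredient is that for each of the four classes (1)--(4), we have the description
\[
\partial \widehat{\A_u} \;=\; \overline{\{\delta_x : x \in S_X\}}^{\,\sigma},
\]
which has been derived earlier from Proposition~\ref{prop:LUR} and its analogues for $\ell_1$, $d(w,1)$ and locally $c$-uniformly convex order continuous sequence spaces. The second ingredient is the $H_b$-density observation, obtained from \cite[Theorem~4.3]{ACG91}, giving that for infinite dimensional $X$ every point evaluation at a point of $B_X$ lies in the Gelfand closure of the $\delta_x$ with $x \in S_X$; in symbols,
\[
\overline{\{\delta_x : x \in S_X\}}^{\,\sigma} \;=\; \overline{\{\delta_x : x \in B_X\}}^{\,\sigma}.
\]

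Combining these two identities gives $\partial \widehat{\A_u} = \overline{\{\delta_x : x \in B_X\}}^{\,\sigma}$ inside $\M(\A_u(B_X))$. Since by the definition recalled from \cite[Section 2]{CGMS} the Corona theorem for $\A_u(B_X)$ is precisely the equality $\M(\A_u(B_X)) = \overline{\{\delta_x : x \in B_X\}}^{\,\sigma}$, we may substitute the right-hand side by $\partial \widehat{\A_u}$ and immediately conclude the equivalence
\[
\text{Corona theorem holds for } \A_u(B_X) \iff \M(\A_u(B_X)) = \partial \widehat{\A_u}.
\]
The argument is entirely symmetric in direction, so no separate "$\Rightarrow$"/"$\Leftarrow$" step is needed beyond this substitution.

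There is essentially no real obstacle in the main line of the argument, as both ingredients have been proved in the body of the paper. The only point that deserves a brief comment is the role of infinite dimensionality: the $H_b$-density step from \cite{ACG91} is stated for infinite dimensional $X$, and indeed in finite dimensions the Shilov boundary $\partial\widehat{\A_u}$ is generally a proper subset of $\M(\A_u(B_X)) \cong \overline{B}_X$ while the Corona theorem is trivially true, so the equivalence is truly an infinite dimensional phenomenon. Thus the proof will open by noting that the conclusion is immediate in finite dimensions only in the degenerate form that both sides fail to be equivalent, and then proceed under the standing assumption that $X$ is infinite dimensional, to which each of the classes (1)--(4) applies as already established.
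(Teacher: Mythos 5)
Your proof is correct and takes essentially the same route as the paper, which obtains the theorem by exactly the substitution you describe: combining $\partial \widehat{\A_u} = \overline{\{\delta_x : x \in S_X\}}^{\,\sigma}$ (established case by case for (1)--(4)) with the $H_b$-density identity $\overline{\{\delta_x : x \in S_X\}}^{\,\sigma} = \overline{\{\delta_x : x \in B_X\}}^{\,\sigma}$ from \cite[Theorem 4.3]{ACG91}, and then invoking the definition of the Corona theorem from \cite{CGMS}. Your caveat about infinite dimensionality is also consistent with the paper, which explicitly restricts to ``$X$ one of (1)--(4) with infinite dimension'' before asserting $\partial \widehat{\A_u} = \overline{\{\delta_x : x \in B_X\}}^{\,\sigma}$.
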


\section{On the Shilov boundary for $\widehat{\A_{wu} (B_X)}$}\label{sec:wu}

In this section, we show that the Shilov boundary for $\widehat{\A_{wu} (B_X)}$ can be precisely described for a large class of Banach spaces $X$. Note that for each $f \in \A_{wu} (B_X)$, the Aron-Berner extension $ \widetilde{f}$ of $f$ belongs to $\A_{w^* u} (B_{X^{**}})$, where $\A_{w^* u} (B_{X^{**}})$ denotes the Banach algebra of all bounded holomorphic functions on $B_{X^{**}}$ which are weak-star uniformly continuous on $B_{X^{**}}$ \cite[Proposition 2.1]{AAM}. Indeed, $\A_{wu} (B_X)$ is isometrically isomorphic to $\A_{w^* u} (B_{X^{**}})$ via the mapping $\Phi : f \mapsto  \widetilde{f}$. Observe that $\Phi^* \vert_{\M (\A_{w^* u} (B_{X^{**}}) )}$ is a homeomorphism between $\M (\A_{w^* u} (B_{X^{**}}))$ and $\M (\A_{w u} (B_{X}))$ which sends point evaluations to point evaluations. 
From this observation, we obtain the following.

\begin{proposition}\label{prop:same_boundary}
Let $X$ be a Banach space. Then the Shilov boundary for $\widehat{\A_{wu} (B_X)}$ is homeomorphic to that of $\widehat{\A_{w^* u} (B_{X^{**}})}$ and the set of peak points for $\widehat{\A_{wu} (B_X)}$ is homeomorphic to that of $\widehat{\A_{w^* u} (B_{X^{**}})}$.
\end{proposition}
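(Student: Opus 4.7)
The plan is to transport all the relevant structure through the isometric algebra isomorphism
\[
\Phi \colon \A_{wu}(B_X) \longrightarrow \A_{w^* u}(B_{X^{**}}), \qquad \Phi(f) = \widetilde{f},
\]
identified in the paragraph preceding the proposition. Setting $\tau := \restr{\Phi^{*}}{\M(\A_{w^*u}(B_{X^{**}}))}$, my first step would be to record the identity
\[
\widehat{\Phi(f)}(\phi) = \phi(\Phi(f)) = \phi(\widetilde{f}) = (\Phi^{*}\phi)(f) = \widehat{f}(\tau(\phi)),
\]
valid for every $\phi \in \M(\A_{w^*u}(B_{X^{**}}))$ and every $f \in \A_{wu}(B_X)$; equivalently, $\widehat{\Phi(f)} = \widehat{f}\circ\tau$. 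Since $\Phi$ is a surjective unital isometric algebra homomorphism, $\tau$ sends homomorphisms to homomorphisms, is a bijection between the two spectra, and is a homeomorphism for the Gelfand topologies sending point evaluations to point evaluations — essentially the content quoted just above the proposition.

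The second step uses this identity together with $\|\widehat{f}\| = \|\widehat{\Phi(f)}\|$ to transfer boundaries. For any closed $\Gamma \subset \M(\A_{wu}(B_X))$,
\[
\sup_{\phi \in \tau^{-1}(\Gamma)}|\widehat{\Phi(f)}(\phi)| = \sup_{\phi\in\tau^{-1}(\Gamma)}|\widehat{f}(\tau(\phi))| = \sup_{\Gamma} |\widehat{f}|,
\]
so $\Gamma$ is a closed boundary for $\widehat{\A_{wu}(B_X)}$ if and only if $\tau^{-1}(\Gamma)$ is a closed boundary for $\widehat{\A_{w^*u}(B_{X^{**}})}$. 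Intersecting over all closed boundaries on each side, and using that $\tau$ is a homeomorphism preserving closed sets, shows that $\tau$ restricts to a homeomorphism from $\partial\widehat{\A_{w^*u}(B_{X^{**}})}$ onto $\partial\widehat{\A_{wu}(B_X)}$.

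For the peak-point statement, the same intertwining identity shows that for $\phi_0 \in \M(\A_{w^*u}(B_{X^{**}}))$ and $f \in \A_{wu}(B_X)$, $\widehat{\Phi(f)}$ peaks at $\phi_0$ if and only if $\widehat{f}$ peaks at $\tau(\phi_0)$; since $\Phi$ is a bijection between the two algebras, every peak function on one side produces a peak function on the other. Hence $\tau$ restricts to a bijection, and therefore a homeomorphism, between the two peak-point sets. The argument has no genuine obstacle: once $\Phi$ and the induced spectral map $\tau$ are in hand, everything follows by routine transport of structure through an isometric algebra isomorphism, the only non-trivial input being the surjectivity of $\Phi$, which is supplied by \cite[Proposition 2.1]{AAM}.
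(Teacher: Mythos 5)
Your proposal is correct and follows essentially the same route as the paper: both transport closed boundaries and peak points through the spectral homeomorphism $\restr{\Phi^{*}}{\M(\A_{w^{*}u}(B_{X^{**}}))}$ induced by the isometric isomorphism $f \mapsto \widetilde{f}$ of \cite[Proposition 2.1]{AAM}, using exactly the intertwining identity you record. The paper's $\Upsilon$ is your $\tau$, and its supremum computation is your boundary-transfer step.
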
 

\begin{proof}
Let $\Gamma \subset \M (\A_{w^* u} (B_{X^{**}}) )$ be a closed boundary for $\widehat{\A_{w^* u} (B_{X^{**}})}$. 
Notice that for fixed $f \in \A_{wu} (B_X)$, we have that 
\begin{align*}
\| f \| = \|  \widetilde{f} \| = \sup \{ | \phi (\Phi (f)) | : \phi \in \Gamma \} &= \sup \{ | \Phi^* (\phi) ( f)| : \phi \in \Gamma\} \\
&= \sup \{ | \psi ( f) | : \psi \in \Upsilon (\Gamma) \},
\end{align*} 
where $\Upsilon := \Phi^* \vert_{\M (\A_{w^* u} (B_{X^{**}}) )} : \M (\A_{w^* u} (B_{X^{**}})) \rightarrow \M (\A_{w u} (B_{X}))$. This implies that $\Upsilon (\Gamma) \subset \M(\A_{wu} (B_X))$ is a closed boundary for $\widehat{\A_{wu} (B_X)}$. Similarly, one can see that if $\Lambda \subset \M (\A_{wu} (B_X))$ is a closed boundary for $\widehat{\A_{wu} (B_X)}$, then $\Upsilon^{-1} (\Lambda)$ is a closed boundary for $\widehat{\A_{w^* u} (B_{X^{**}})}$. 

Finally, note that if $\widehat{f} \in \widehat{\A_{w^* u} (B_{X^{**}})}$ peaks at $\phi \in \M(\A_{w^* u} (B_{X^{**}}) )$, then $\widehat{\Phi^{-1} (f)}$ peaks at $\Upsilon (\phi)$. This implies that $\Upsilon (\rho \widehat{\A_{w^* u} } ) \subseteq \rho \widehat{\A_{wu}}$. Similarly, one can show that $\Upsilon^{-1} (\rho \widehat{\A_{wu}} ) \subseteq \rho \widehat{\A_{w^* u} }$. 
\end{proof} 

Let us denote by $P({B}_{X^{**}})$ the closed subalgebra of $\A_{w^{*} u} (B_{X^{**}})$ generated by constants and the restrictions to $B_{X^{**}}$ of the elements of $X^*$. 


\begin{theorem}\label{prop:separable_dual}
Let $X$ be a Banach space. Then, 
 \[
\{ \delta_z : z \in \Ext_{\mathbb{C}} (\overline{B}_{X^{**}}) \} \subseteq \partial \widehat{\A_{w^* u}}. 
 \]
If, in addition, $X^*$ is separable, then 
 \[
\{ \delta_z : z \in \Ext_{\mathbb{C}} (\overline{B}_{X^{**}}) \} \subseteq \rho \widehat{\A_{w^* u}} \subseteq \partial \widehat{\A_{w^* u}}  \subseteq  \overline{\{ \delta_z : z \in \Ext_{\mathbb{C}} (\overline{B}_{X^{**}}) \} }^{\,\sigma}.
 \]
In this case, the Shilov boundary for $\widehat{\A_{w^* u} (B_{X^{**}}) }$ coincides with $\overline{ \{ \delta_z : z \in \Ext_{\mathbb{C}} (\overline{B}_{X^{**}}) \} }^{\, \sigma}$.
\end{theorem}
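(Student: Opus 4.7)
The plan is to work throughout on the weak-star compact ball $(\overline{B}_{X^{**}}, w^{*})$, on which $\A_{w^{*}u}(B_{X^{**}})$ is a closed unital subalgebra of $C(\overline{B}_{X^{**}}, w^{*})$ containing the polynomial algebra $P(B_{X^{**}})$. Both halves of the statement will be obtained by adapting, respectively, the strong-boundary-point argument of Theorem \ref{prop:strong_peak}(1) and the boundary-extraction techniques used for $\A_u(B_{C(K)})$ in Section \ref{section_1}.

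For the first inclusion, I would prove the stronger statement that each $\delta_{z_0}$ with $z_0 \in \Ext_{\mathbb{C}}(\overline{B}_{X^{**}})$ is a strong boundary point for $\widehat{\A_{w^{*}u}}$, and therefore lies in the Shilov boundary. The crucial technical input is the weak-star analog of Arenson's theorem \cite{Arenson}: every $z_0 \in \Ext_{\mathbb{C}}(\overline{B}_{X^{**}})$ is a strong boundary point for $P(B_{X^{**}})$ in the weak-star topology, i.e., for each weak-star open neighborhood $V$ of $z_0$ in $\overline{B}_{X^{**}}$ there is $P \in P(B_{X^{**}})$ with $P(z_0) = \|P\|_{\overline{B}_{X^{**}}} = 1$ and $\sup\{ |P(w)| : w \in \overline{B}_{X^{**}} \setminus V \} < 1$. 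Granting this, the proof of Theorem \ref{prop:strong_peak}(1) transcribes almost verbatim with $\A_a(B_X)$ replaced by $P(B_{X^{**}})$ and norm continuity on $\overline{B}_X$ replaced by weak-star continuity on $\overline{B}_{X^{**}}$: STEP I shows that any $\phi \in \M(\A_{w^{*}u}(B_{X^{**}}))$ with $\pi(\phi) = z_0$ must equal $\delta_{z_0}$ (the polynomials $((1 + P_n)/2)^{M_n}$ annihilate $g - g(z_0)$ on smaller and smaller weak-star neighborhoods), and STEP II constructs $g = \sum_{n \geq n_0} 2^{-(n-n_0+1)} g_n \in \A_{w^{*}u}(B_{X^{**}})$ witnessing that $\delta_{z_0}$ is a strong boundary point of $\widehat{\A_{w^{*}u}}$.

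Assume now that $X^{*}$ is separable. Then $\A_{wu}(B_X)$ is separable: by Lemma \ref{lem:weak_unif_conti} each element is the uniform limit on $B_X$ of its polynomial Ces\`aro means, and the latter can be approximated by finite-type polynomials built from a fixed countable dense subset of $X^{*}$. Via Proposition \ref{prop:same_boundary} this makes $\M(\A_{w^{*}u}(B_{X^{**}}))$ weak-star compact and metrizable, so every strong boundary point is a peak point by \cite[Corollary 4.3.7]{Dales}, upgrading the first inclusion to $\rho \widehat{\A_{w^{*}u}}$. For the upper containment, by Proposition \ref{prop:boundary_norming} applied in the obvious way to $\A_{w^{*}u}(B_{X^{**}})$ on $\overline{B}_{X^{**}}$, it suffices to verify that $\Ext_{\mathbb{C}}(\overline{B}_{X^{**}})$ is a boundary for $\A_{w^{*}u}(B_{X^{**}})$. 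Given $f$ of norm one, the weak-star compact set $M_f = \{ z \in \overline{B}_{X^{**}} : |f(z)| = 1 \}$ is disk-stable: if $z \in M_f$ and $y \in X^{**}$ satisfies $z + \lambda y \in \overline{B}_{X^{**}}$ for all $|\lambda| \leq 1$, then the holomorphic map $\lambda \mapsto f(z + \lambda y)$ attains its maximum modulus $1$ at $\lambda = 0$ and is hence constant, so $z + \lambda y \in M_f$. Applying Zorn's lemma to the family of non-empty weak-star closed disk-stable subsets of $M_f$ (non-empty by compactness of decreasing chains) yields a minimal element $F_0$; if $F_0$ contained two distinct points, some $x^{*} \in X^{*}$ separates them, and the proper subset $\{z \in F_0 : \re x^{*}(z) = \max_{F_0} \re x^{*}\}$ remains disk-stable, because the harmonic maximum principle forces the affine map $\lambda \mapsto \re x^{*}(z + \lambda y)$ to be constant on $|\lambda| \leq 1$ whenever its maximum is attained at $\lambda = 0$. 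Hence $F_0 = \{z_0\} \subseteq M_f \cap \Ext_{\mathbb{C}}(\overline{B}_{X^{**}})$.

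The principal obstacle is the weak-star version of Arenson's theorem invoked in the first inclusion: while the finite-dimensional statement is classical, producing polynomials in $X^{*}$ (rather than $X^{***}$) that strongly peak at an arbitrary $\mathbb{C}$-extreme point of $\overline{B}_{X^{**}}$ in the weak-star topology is the one step where a non-routine construction --- most likely a complex supporting hyperplane argument followed by raising to a large power --- is needed. The Zorn extraction in the separable case is structurally simpler but requires care to refine by $\re x^{*}$ (rather than $|x^{*}|$ or $|P|$), since the harmonic rather than holomorphic maximum principle is what preserves disk-stability under refinement.
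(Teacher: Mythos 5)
Your overall skeleton matches the paper's: the lower inclusion comes from Arenson's theorem plus the argument of Theorem \ref{prop:strong_peak}, and the upper inclusion comes from the fact that $\Ext_{\mathbb{C}}(\overline{B}_{X^{**}})$ is a boundary for $\A_{w^*u}(B_{X^{**}})$ together with Proposition \ref{prop:boundary_norming}. You diverge in two sub-arguments, both legitimately. First, what you single out as the ``principal obstacle'' is not one: Arenson's result \cite{Arenson} is stated for the algebra generated by the continuous affine functions on an arbitrary convex compactum, so applied to $(\overline{B}_{X^{**}},w^*)$ it literally says that $\Ext_{\mathbb{C}}(\overline{B}_{X^{**}})$ is the set of strong boundary points for $P(B_{X^{**}})$; the paper simply cites it and transcribes Theorem \ref{prop:strong_peak}(1), exactly as you propose. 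Second, for the upper containment the paper cites \cite[Proposition 2.3]{AAM}, whereas you reprove that fact via the Zorn/disk-stability extraction on the maximum-modulus set $M_f$; your argument is correct (including the point that one must refine by $\re x^*$ and use the harmonic maximum principle) and is essentially the proof behind the cited result, so this is self-contained at the cost of length. Third, for upgrading strong boundary points to peak points under separability of $X^*$, the paper works on the domain side (metrizability of $(\overline{B}_{X^{**}},w^*)$ turns strong boundary points of $P(B_{X^{**}})$ into strong peak points, then Theorem \ref{prop:strong_peak}(2) applies), while you work on the spectrum side (metrizability of $\M(\A_{w^*u})$ plus \cite[Corollary 4.3.7]{Dales}); both routes reach the same conclusion.

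One step of yours does need repair: your justification that $\A_{wu}(B_X)$ is separable approximates the Taylor coefficients by finite-type polynomials from a countable dense subset of $X^*$, but the density of finite-type polynomials in $\Pol_{wu}(^N X)$ is exactly the point where \cite[Proposition 2.8]{Dineen} requires $X^*$ to have the approximation property, which is not assumed here (there are separable dual spaces failing AP). The conclusion you want survives by a softer argument: every element of $\A_{w^*u}(B_{X^{**}})$ extends to a $w^*$-continuous function on the compact metrizable space $(\overline{B}_{X^{**}},w^*)$, so the algebra embeds isometrically into the separable space $C((\overline{B}_{X^{**}},w^*))$, whence its spectrum is metrizable. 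With that substitution (or by switching to the paper's domain-side upgrade), your proof is complete.
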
 

\begin{proof}
Due to E.L. Arenson \cite{Arenson}, it is known that $\Ext_{\mathbb{C}} (\overline{B}_{X^{**}})$ is the set of strong boundary points for $P(B_{X^{**}})$. Thus, the same argument used in Theorem \ref{prop:strong_peak} shows that the $\{ \delta_z : z \in \Ext_{\mathbb{C}} (\overline{B}_{X^{**}}) \}$ is contained in the set of strong boundary points for $\widehat{\A_{w^* u } (B_{X^{**}})}$; hence $\{ \delta_z : z \in \Ext_{\mathbb{C}} (\overline{B}_{X^{**}}) \}$ is contained in $\partial \widehat{\A_{w^* u}}$.

Suppose that $X^*$ is a separable Banach space. Then the domain $(B_{X^{**}}, w(X^{**}, X^*) )$ is compact metrizable. From this, we can deduce that $\Ext_{\mathbb{C}} (\overline{B}_{X^{**}})$ is the set of all strong peak points for $P({B}_{X^{**}})$. It follows from Theorem \ref{prop:strong_peak} that $\{ \delta_z : z \in \Ext_{\mathbb{C}} (\overline{B}_{X^{**}}) \}$ is contained in $\rho \widehat{\A_{w^* u}}$.
Moreover, it is observed in \cite[Proposition 2.3]{AAM} that $\Ext_{\mathbb{C}} (\overline{B}_{X^{**}})$ is a minimal subset of $B_{X^{**}}$ satisfying that $\| f \| = \max \{ |f(z)| :z \in \Ext_{\mathbb{C}} (\overline{B}_{X^{**}}) \}$ for all $f \in \A_{w^* u } (B_{X^{**}})$. Thus, Proposition \ref{prop:boundary_norming} implies that $\partial \widehat{\A_{w^* u}}  \subseteq  \overline{\{ \delta_z : z \in \Ext_{\mathbb{C}} (\overline{B}_{X^{**}}) \} }^{\,\sigma}$.  
\end{proof}

\begin{cor}
Let $X$ be a Banach space. Then, 
 \[
\{ \delta_z : z \in \Ext_{\mathbb{C}} (\overline{B}_{X^{**}}) \} \subseteq \partial \widehat{\A_{w u}}.  \]
If, in addition, $X^*$ is separable, then 
 \[
\{ \delta_z : z \in \Ext_{\mathbb{C}} (\overline{B}_{X^{**}}) \} \subseteq \rho \widehat{\A_{w u}} \subseteq \partial \widehat{\A_{w u}}  \subseteq  \overline{\{ \delta_z : z \in \Ext_{\mathbb{C}} (\overline{B}_{X^{**}}) \} }^{\,\sigma}.
 \]
In this case, the Shilov boundary for $\widehat{\A_{w u} (B_{X}) }$ coincides with $\overline{ \{ \delta_z : z \in \Ext_{\mathbb{C}} (\overline{B}_{X^{**}}) \} }^{\, \sigma}$.
\end{cor}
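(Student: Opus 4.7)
The plan is to deduce this corollary directly from Theorem~\ref{prop:separable_dual} by transporting every inclusion there through the homeomorphism supplied by Proposition~\ref{prop:same_boundary}. Let $\Upsilon := \Phi^*\vert_{\M(\A_{w^* u}(B_{X^{**}}))}$ denote the resulting homeomorphism between $\M(\A_{w^* u}(B_{X^{**}}))$ and $\M(\A_{wu}(B_X))$. The first step is to record the (stated but unused) fact that $\Upsilon$ acts as the identity on point evaluations: for $z \in \overline{B}_{X^{**}}$ and $f \in \A_{wu}(B_X)$,
\[
\Upsilon(\delta_z)(f) = \delta_z(\Phi(f)) = \widetilde{f}(z) = \delta_z(f),
\]
where on the left $\delta_z$ denotes evaluation in $\M(\A_{w^* u}(B_{X^{**}}))$ and on the right in $\M(\A_{wu}(B_X))$. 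Hence $\Upsilon$ matches the two copies of $\{\delta_z : z \in \Ext_{\mathbb{C}}(\overline{B}_{X^{**}})\}$ with each other.

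Next I would invoke the remaining content of Proposition~\ref{prop:same_boundary}, namely that $\Upsilon$ restricts to a homeomorphism between $\partial\widehat{\A_{w^* u}}$ and $\partial\widehat{\A_{wu}}$ and between $\rho\widehat{\A_{w^* u}}$ and $\rho\widehat{\A_{wu}}$. Combining this with the unconditional inclusion $\{\delta_z : z \in \Ext_{\mathbb{C}}(\overline{B}_{X^{**}})\} \subseteq \partial\widehat{\A_{w^* u}}$ from Theorem~\ref{prop:separable_dual} immediately yields the first assertion $\{\delta_z : z \in \Ext_{\mathbb{C}}(\overline{B}_{X^{**}})\} \subseteq \partial\widehat{\A_{wu}}$ of the corollary.

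When $X^*$ is separable, Theorem~\ref{prop:separable_dual} supplies the full chain of inclusions for $\widehat{\A_{w^* u}}$ and the identity $\partial\widehat{\A_{w^* u}} = \overline{\{\delta_z : z \in \Ext_{\mathbb{C}}(\overline{B}_{X^{**}})\}}^{\,\sigma}$. Since $\Upsilon$ is a homeomorphism between compact Hausdorff spaces, it commutes with the Gelfand-topology closure; so applying $\Upsilon$ to that chain produces the claimed chain in $\M(\A_{wu}(B_X))$ together with the asserted description of $\partial\widehat{\A_{wu}}$. There is no real obstacle here: the only substantive ingredient beyond the two cited results is the verification $\Upsilon(\delta_z) = \delta_z$, which is the short computation above.
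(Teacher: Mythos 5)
Your proposal is correct and follows essentially the same route as the paper: both transport the inclusions of Theorem~\ref{prop:separable_dual} through the homeomorphism $\Upsilon$ from Proposition~\ref{prop:same_boundary}, using that $\Upsilon$ fixes point evaluations (so the two copies of $\{\delta_z : z \in \Ext_{\mathbb{C}}(\overline{B}_{X^{**}})\}$ correspond), carries Shilov boundaries to Shilov boundaries and peak points to peak points, and commutes with Gelfand-topology closures. The explicit computation $\Upsilon(\delta_z)(f) = \widetilde{f}(z) = \delta_z(f)$ is exactly the verification the paper leaves to the reader.
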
 

\begin{proof}
Notice that $\Upsilon ( \{ \delta_z : z \in \Omega \}) = \{ \delta_z : z \in \Omega \}$ for a subset $\Omega$ of $\overline{B}_{X^{**}}$, where $\Upsilon$ is the homeomorphism in Proposition \ref{prop:same_boundary} and the former $\delta_z$ is understood as a member of $\M(\A_{w^* u} (B_{X^{**}}) )$ while the latter $\delta_z$ is a member of $\M(\A_{w u} (B_{X}) )$. Moreover, $\Upsilon ( \overline{\{ \delta_z : z \in \Omega \} }^{\,\sigma}) =   \overline{\{ \delta_z : z \in \Omega \} }^{\,\sigma}$ where the former $\sigma$ denotes the Gelfand topology on $\M (\A_{w^* u} (B_{X^{**}}))$ and the latter one is the Gelfand topology on $\M (\A_{w u} (B_{X}))$. Thus, the assertion follows from Theorem \ref{prop:separable_dual}. 
\end{proof} 

Recall from Lemma \ref{lem:weak_unif_conti} and Proposition \ref{prop:AP_wu_spectrum} that when $X^*$ has the approximation property, the spectrum $\M (\A_{wu} (B_X))$ coincides with $\{ \delta_z : z \in \overline{B}_{X^{**}} \}$. Using this fact, we observe that the Gelfand topology $\sigma$-closure of a set of point evaluations in $\M (\A_{wu} (B_X))$ can be expressed in a simpler way as follows. 

\begin{lemma}\label{lem:gamma_closure}
Let $X$ be a Banach space whose dual has the approximation property and $\Gamma$ be a subset of $B_X$. Then $\overline{ \{\delta_z : z \in \Gamma \}}^{\,\sigma} = \left\{ \delta_z : z \in \overline{\Gamma}^{w^*} \right\}$ in $\M (\A_{wu} (B_X))$. 
\end{lemma}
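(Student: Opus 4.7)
The plan is to prove the two inclusions separately, leaning on the two ingredients that are either stated or immediately available at this point of the paper: the spectrum identification $\M(\A_{wu}(B_X)) = \{\delta_z : z \in \overline{B}_{X^{**}}\}$ (recorded in the sentence preceding the lemma as a consequence of Lemma \ref{lem:weak_unif_conti} together with Proposition \ref{prop:AP_wu_spectrum} adapted to $\A_{wu}$), and the fact that for $f \in \A_{wu}(B_X)$ the Aron--Berner extension $\widetilde{f}$ belongs to $\A_{w^*u}(B_{X^{**}})$, so $\widetilde{f}$ is weak-star uniformly continuous on $B_{X^{**}}$ and therefore extends uniquely to a weak-star continuous function on the weak-star compact ball $\overline{B}_{X^{**}}$.

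For the inclusion $\overline{\{\delta_z : z \in \Gamma\}}^{\,\sigma} \subseteq \{\delta_z : z \in \overline{\Gamma}^{w^*}\}$, I would take $\phi$ in the Gelfand closure on the left-hand side. By the spectrum identification, $\phi = \delta_{z_0}$ for some $z_0 \in \overline{B}_{X^{**}}$. Choose a net $(z_\alpha) \subseteq \Gamma$ with $\delta_{z_\alpha} \xrightarrow{\sigma} \delta_{z_0}$, and test against each $x^* \in X^* \subseteq \A_{wu}(B_X)$: this gives $x^*(z_\alpha) = \delta_{z_\alpha}(x^*) \to \delta_{z_0}(x^*) = z_0(x^*)$, which says exactly that $z_\alpha \xrightarrow{w^*} z_0$ in $X^{**}$, placing $z_0 \in \overline{\Gamma}^{w^*}$.

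For the reverse inclusion, given $z_0 \in \overline{\Gamma}^{w^*}$, pick a net $(z_\alpha) \subseteq \Gamma$ with $z_\alpha \xrightarrow{w^*} z_0$. For each $f \in \A_{wu}(B_X)$, the weak-star continuity of $\widetilde{f}$ on $\overline{B}_{X^{**}}$ yields
\[
\delta_{z_\alpha}(f) \;=\; f(z_\alpha) \;=\; \widetilde{f}(z_\alpha) \;\longrightarrow\; \widetilde{f}(z_0) \;=\; \delta_{z_0}(f),
\]
so $\delta_{z_\alpha} \xrightarrow{\sigma} \delta_{z_0}$ in $\M(\A_{wu}(B_X))$, finishing the inclusion.

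The argument has no substantial obstacle: once the spectrum has been identified with point evaluations on $\overline{B}_{X^{**}}$ it is essentially a bookkeeping matter. The only subtle point to check is that $z_0$ may lie on the unit sphere of $X^{**}$ rather than in the open ball, but weak-star uniform continuity of $\widetilde{f}$ on $B_{X^{**}}$ combined with the weak-star compactness of $\overline{B}_{X^{**}}$ (so that $B_{X^{**}}$ is weak-star dense in its completion $\overline{B}_{X^{**}}$) guarantees the required weak-star continuous extension to the closed ball, which legitimizes the limit passage $\widetilde{f}(z_\alpha) \to \widetilde{f}(z_0)$.
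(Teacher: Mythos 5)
Your proof is correct and follows essentially the same route as the paper: identify $\phi$ with some $\delta_{z_0}$ via the spectrum description, pass to the weak-star limit through the restriction to $X^*$ for one inclusion, and use the weak-star (uniform) continuity of the Aron--Berner extension $\widetilde{f}$ on $\overline{B}_{X^{**}}$ for the other. Your closing remark about extending $\widetilde{f}$ from $B_{X^{**}}$ to the weak-star compact closed ball is a point the paper leaves implicit, but it is the same argument.
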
 

\begin{proof}
Let $\phi$ be an element of $\overline{ \{\delta_z : z \in \Gamma \}}^{\,\sigma}$, then there exists a net $(z_\alpha)$ in $\Gamma$ such that $(\delta_{z_\alpha})$ converges to $\phi$. By Proposition \ref{prop:AP_wu_spectrum}, $\phi = \delta_{z_0}$ for some $z_0 \in \overline{B}_{X^{**}}$. Then $(z_\alpha) = ( \pi (\delta_z (\alpha) ) )$ converges weak-star to $\pi (\phi) = z_0$. It follows that $\phi \in \left\{ \delta_z : z \in \overline{\Gamma}^{w^*} \right\}$. 

Conversely, let $z_0 \in \overline{\Gamma}^{w^*}$ be given. Then there is a net $(z_\alpha)$ in $\Gamma$ so that $(z_\alpha)$ converges weak-star to $z_0$. For any $f \in \A_{wu} (B_X)$, the net $( \widetilde{f} (z_\alpha))$ converges to $ \widetilde{f} (z_0)$ since $ \widetilde{f} \in \A_{w^* u } (B_{X^{**}})$. This implies that $(\delta_{z_\alpha})$ converges to $\delta_{z_0}$ in $\M (\A_{wu} (B_X))$; hence $\delta_{z_0} \in \overline{ \{ \delta_z : z \in \Gamma \}}^{\, \sigma}$. 
\end{proof}


\begin{cor}\label{cor:sep_ap}
Let $X$ be a Banach space whose dual space is a separable Banach space with the approximation property. Then the Shilov boundary for $\widehat{\A_{w u} (B_{X})}$ coincides with $\left \{ \delta_z : z \in \overline{\Ext_{\mathbb{C}} (\overline{B}_{X^{**}}) }^{w^*}\right\}$.
\end{cor}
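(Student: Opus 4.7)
The plan is to combine the corollary for separable dual spaces with Lemma \ref{lem:gamma_closure}. Since $X^*$ is separable, that corollary yields
\[
\partial \widehat{\A_{wu}(B_X)} = \overline{\{ \delta_z : z \in \Ext_{\mathbb{C}}(\overline{B}_{X^{**}}) \}}^{\,\sigma},
\]
so the task reduces to identifying the Gelfand-topology closure on the right with $\{ \delta_z : z \in \overline{\Ext_{\mathbb{C}}(\overline{B}_{X^{**}})}^{\,w^*} \}$. This is what Lemma \ref{lem:gamma_closure} achieves, up to a formal matter described below.

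The one technical point is that Lemma \ref{lem:gamma_closure} is stated for $\Gamma \subset B_X$, whereas here $\Ext_{\mathbb{C}}(\overline{B}_{X^{**}}) \subset \overline{B}_{X^{**}}$. I would therefore first verify that the lemma's proof extends verbatim to any $\Gamma \subset \overline{B}_{X^{**}}$. Under the AP hypothesis, the remark preceding Lemma \ref{lem:gamma_closure} gives $\M(\A_{wu}(B_X)) = \{ \delta_z : z \in \overline{B}_{X^{**}} \}$ with $\pi(\delta_z) = z$, so Gelfand convergence $\delta_{z_\alpha} \to \delta_{z_0}$ with $z_\alpha \in \Gamma$ forces weak-star convergence $z_\alpha \to z_0$, placing $z_0$ in $\overline{\Gamma}^{w^*}$. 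Conversely, weak-star convergence $z_\alpha \to z_0$ in $\overline{B}_{X^{**}}$ implies $\widetilde{f}(z_\alpha) \to \widetilde{f}(z_0)$ for every $f \in \A_{wu}(B_X)$, since $\widetilde{f}$ is weak-star uniformly continuous on $B_{X^{**}}$ and hence extends weak-star continuously to $\overline{B}_{X^{**}}$; this gives $\delta_{z_\alpha} \to \delta_{z_0}$ in $\M(\A_{wu}(B_X))$.

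Applying this extended lemma with $\Gamma = \Ext_{\mathbb{C}}(\overline{B}_{X^{**}})$ and substituting into the displayed identity above yields the corollary. The only real obstacle is therefore the mild extension of Lemma \ref{lem:gamma_closure} from $B_X$ to $\overline{B}_{X^{**}}$, but this is cosmetic since the original proof never uses $\Gamma \subset B_X$ in an essential way.
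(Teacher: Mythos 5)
Your proposal is correct and is exactly the paper's intended argument: the corollary is stated there without proof as an immediate consequence of the preceding corollary (Shilov boundary $= \overline{\{\delta_z : z \in \Ext_{\mathbb{C}}(\overline{B}_{X^{**}})\}}^{\,\sigma}$ for separable $X^*$) combined with Lemma \ref{lem:gamma_closure}. Your observation that Lemma \ref{lem:gamma_closure} must be extended from $\Gamma \subset B_X$ to $\Gamma \subset \overline{B}_{X^{**}}$ is a genuine and correctly resolved point --- the paper itself tacitly uses this extension (also in Theorem \ref{prop:reflexive_AP_shilov}, where $\Gamma = \partial \A_{wu}(B_X) \subseteq \overline{B}_X$), and your justification via the weak-star continuous extension of $\widetilde{f}$ to $\overline{B}_{X^{**}}$ is the right one.
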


One of direct consequences of Corollary \ref{cor:sep_ap} is that if $X$ is a strictly convex Banach space whose dual space is separable and has the approximation property, such as $\ell_p$ ($1<p<\infty$), then $\partial \widehat{\A_{w u} (B_{X})} = \left\{ \delta_z : z \in \overline{S_X}^{w^*} \right\} = \left\{ \delta_z : z \in \overline{B}_{X^{**}} \right\}$.

Moreover, notice that if $X$ is $c_0$, the Tsirelson or the Tsirelson-James space, then $ \Pol_{wu} (^N X) = \Pol (^N X)$; hence $\A_u (B_X) = \A_{wu} (B_X)$ (see \cite{AD} and \cite{Gu}). Besides, it is known that $\A_u (B_{d_{*} (w,1)}) = \A_{wu} (B_{d_{*} (w,1)})$ whenever $w \notin \ell_p$ for every $p \in \N$ \cite[Proposition 2.6]{AAM}. As all the just mentioned Banach spaces have separable dual spaces satisfying the approximation property, Corollary \ref{cor:sep_ap} yields the following. 

\begin{cor}\label{cor:c_0,Tsi,Lor}
If a Banach space $X$ is one of the following:
\begin{enumerate}
\setlength\itemsep{0.4em}
\item[(a)] $c_0$
\item[(b)] Tsirelson's space,
\item[(c)] Tsirelson-James space, 
\item[(d)] $d_{*} (w,1)$ with $w \notin \ell_p$ for every $p \in \N$, 
\end{enumerate} 
then the Shilov boundary for $\widehat{\A_u (B_X)}=\widehat{\A_{wu} (B_X)}$ coincides with $\left \{ \delta_z : z \in \overline{\Ext_{\mathbb{C}} (\overline{B}_{X^{**}}) }^{w^*}\right\}$. 
\end{cor}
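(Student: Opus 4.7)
The plan is to reduce Corollary \ref{cor:c_0,Tsi,Lor} to the previous Corollary \ref{cor:sep_ap} by verifying two things for each of the four classes of spaces $X$ listed: first, that $\A_u(B_X) = \A_{wu}(B_X)$ as Banach algebras (so that the Gelfand transform images coincide, and in particular their spectra and Shilov boundaries are the same); and second, that $X^*$ is separable and has the approximation property, so that Corollary \ref{cor:sep_ap} applies and identifies the Shilov boundary of $\widehat{\A_{wu}(B_X)}$ with $\{\delta_z : z \in \overline{\Ext_{\mathbb{C}}(\overline{B}_{X^{**}})}^{w^*}\}$.

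For the first point, the equality $\A_u(B_X) = \A_{wu}(B_X)$ follows from Lemma \ref{lem:weak_unif_conti} together with the assertion $\Pol(^N X) = \Pol_{wu}(^N X)$ for every $N \in \mathbb{N}$: indeed, Lemma \ref{lem:weak_unif_conti} says that an element $f \in \A_u(B_X)$ with Taylor expansion $\sum_N P_N$ belongs to $\A_{wu}(B_X)$ precisely when each $P_N$ is weakly uniformly continuous on bounded sets, so the equality of the polynomial spaces at every homogeneity degree forces the equality of the algebras. For $X = c_0$, Tsirelson's space, and the Tsirelson-James space this polynomial identity is exactly what is recorded in \cite{AD} and \cite{Gu}, and for $X = d_*(w,1)$ with $w \notin \ell_p$ for every $p$, the identity $\A_u(B_X) = \A_{wu}(B_X)$ is precisely \cite[Proposition 2.6]{AAM}, as noted in the paragraph preceding the corollary.

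For the second point, it is standard that each of the listed spaces has a separable dual enjoying the approximation property: $c_0^* = \ell_1$, the duals of Tsirelson's space and the Tsirelson-James space, and the dual of $d_*(w,1)$ (namely $d(w,1)$) are all separable sequence spaces with unconditional bases, hence have the approximation property. Consequently, all hypotheses of Corollary \ref{cor:sep_ap} are satisfied, and one concludes
\[
\partial \widehat{\A_{wu}(B_X)} = \left\{ \delta_z : z \in \overline{\Ext_{\mathbb{C}}(\overline{B}_{X^{**}})}^{w^*} \right\}.
\]
Combining this with the identity $\widehat{\A_u(B_X)} = \widehat{\A_{wu}(B_X)}$ established in the previous paragraph finishes the argument.

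There is essentially no obstacle here: the proof is a bookkeeping combination of Lemma \ref{lem:weak_unif_conti}, the cited polynomial-approximation results, and Corollary \ref{cor:sep_ap}. The only mildly delicate part is making sure to invoke the correct references for each of the four spaces, and noting that the hypothesis on $X^*$ having AP is in fact automatic for these classical sequence-space examples.
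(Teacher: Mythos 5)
Your proposal is correct and follows essentially the same route as the paper: the paragraph preceding the corollary records exactly the two facts you verify ($\A_u(B_X)=\A_{wu}(B_X)$ via the polynomial identity and Lemma \ref{lem:weak_unif_conti}, respectively via \cite[Proposition 2.6]{AAM} for $d_*(w,1)$, plus separability and the approximation property of $X^*$) and then invokes Corollary \ref{cor:sep_ap}. Nothing is missing.
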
 

Let us remark that, in contrast to (d) of Corollary \ref{cor:c_0,Tsi,Lor}, it is known that there is no minimal closed boundary for $\A_{wu} (B_{d_{*} (w, 1)})$ \cite[Corollary 3.6]{AAM}. Moreover, the item (a) of Corollary \ref{cor:c_0,Tsi,Lor} reproves Proposition \ref{prop:A_u_c_0} since $\Ext_{\mathbb{C}} (\overline{B}_{\ell_\infty} ) = \mathbb{T}^{\mathbb{N}} $.

Finally, we would like to present the descriptions of the Shilov boundaries for $\M(A_{wu} (B_X))$ for Banach spaces $X$ which have the Radon-Nikod\'ym property and whose duals have the  approximation property. 
To this end, we observe first the following lemma by using the result of \cite{CLS} concerning on the denseness of strong peak functions. 

\begin{lemma}\label{lem:reflexive_AP_strong_peak}
Suppose that a Banach space $X$ has the Radon-Nikod\'ym property and $X^*$ has the approximation property. Let $x_0 \in S_X$ be a strong peak point for $\A_{wu} (B_X)$ and $\eps >0$ be given. Then there is a strong peak point $x_1$ for $\A_a (B_X)$ such that $\|x _0 - x_1 \| < \eps$. 
\end{lemma}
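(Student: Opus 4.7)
The plan is to start from a strong peak function $f \in \A_{wu}(B_X)$ for $x_0$ (so that $\|f\|=f(x_0)=1$ and for every neighbourhood $U$ of $x_0$, $\sup\{|f(y)|:y\in\overline{B}_X\setminus U\}<1$), replace it by a nearby strong peak function $h$ lying in the smaller algebra $\A_a(B_X)$, and argue that the strong peak point $x_1$ of $h$ cannot lie far from $x_0$ without contradicting the peaking behaviour of $f$. Given $\eps>0$, the quantity
\[
\eta := 1 - \sup\{|f(y)|:\,y\in\overline{B}_X,\ \|y-x_0\|\geq \eps\}
\]
is strictly positive and will measure how much slack we have in the perturbation.

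I would first reduce to the algebra $\A_a(B_X)$ via the approximation property. By Lemma~\ref{lem:weak_unif_conti}, each Taylor coefficient $P_N$ of $f$ at $0$ belongs to $\Pol_{wu}(^N X)$, and since $X^*$ has the approximation property the argument already recalled in the proof of Proposition~\ref{prop:AP_wu_spectrum} shows that each $P_N$ is approximable in sup norm on $B_X$ by $N$-homogeneous finite-type polynomials. Combined with the uniform convergence on $B_X$ of the Ces\`aro means of the Taylor expansion of a function in $\A_u(B_X)$, this places $f$ itself in $\A_a(B_X)$. I would then apply the denseness theorem of \cite{CLS}, which under the present RNP-of-$X$ plus AP-of-$X^*$ hypotheses asserts that strong peak functions are norm dense in $\A_a(B_X)$, to produce a strong peak function $h\in\A_a(B_X)$ with $\|h-f\|<\eta/2$, peaking at some point $x_1\in S_X$.

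The closing step is a short triangle-inequality computation: $\|h\|\geq h(x_0)>1-\eta/2$ forces $|h(x_1)|=\|h\|>1-\eta/2$ and hence $|f(x_1)|>1-\eta$, which by the very definition of $\eta$ rules out $\|x_1-x_0\|\geq\eps$. The main obstacle is really only the first ingredient: one must cleanly pass from $f\in\A_{wu}(B_X)$ to an element of $\A_a(B_X)$ so that \cite{CLS} can be invoked, which is exactly what the interplay between Lemma~\ref{lem:weak_unif_conti} and the approximation property of $X^*$ delivers. Once inside $\A_a(B_X)$, the proof amounts to a mechanical combination of \cite{CLS} with the triangle inequality.
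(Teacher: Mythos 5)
Your argument is correct and follows essentially the same route as the paper's: both pass from the strong peak function $f$ to a finite-type (i.e.\ $\A_a(B_X)$) approximation via Lemma~\ref{lem:weak_unif_conti} and the approximation property of $X^*$, invoke the RNP-based perturbation from \cite{CLS} to produce a nearby strong peak function for $\A_a(B_X)$, and locate its peak point $x_1$ near $x_0$ by the same triangle-inequality estimate against the peaking modulus of $f$. The only cosmetic differences are that you use Ces\`aro means where the paper uses the dilations $f_r(x)=f(rx)$ together with partial Taylor sums, and that you perturb $f$ itself rather than its finite-type approximant $R$.
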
  

\begin{proof}
Let $f_0 \in \A_{wu} (B_X)$ be the strong peak function corresponding to $x_0$. Take $\delta >0$ such that $|f(x)| < 1- \delta$ whenever $\|x - x_0\| > \eps$. 
For $0 < r < 1$, consider $f_r (x) := f(rx)$ and note that $(f_r)$ converges to $f$ uniformly on $B_X$ as $r$ increases to $1$. On the other hand, by Lemma \ref{lem:weak_unif_conti}, the Taylor coefficients of $f_r \vert_{B_X}$ at $0$ are also weakly uniformly continuous on bounded sets, and the Taylor series expansion of $f_r \vert_{B_X}$ converges uniformly on $B_X$. 
It follows that there exists a continuous polynomial $Q$ which is weakly uniformly continuous on bounded sets such that $\| Q - f_0 \| < \frac{\delta}{3}$. 
Since $X^*$ has the approximation property, notice from \cite[Proposition 2.8]{Dineen} that there exists a finite type polynomial $R$ such that $\| R - Q \| < \frac{\delta}{3}$. Notice that $X$ has the Radon-Nikod\'ym property, so the proof of  \cite[Theorem 4.4]{CLS} implies that there exists a finite type polynomial $R'$ such that $\|R' \| < \frac{\delta}{3}$ and $P:= R + R' \in \A_a (B_X)$ peaks strongly at $x_1 \in S_X$. It follows that $\| f_0 - P \| < \delta$. In particular, we have that $|f_0 (x_1)| \geq 1 - \delta$. This implies that $\|x_1 - x_0\| \leq \eps$. Therefore, $x_1$ is the desired strongly peak point for $\A_a (B_X)$. 
\end{proof}

\begin{theorem}\label{prop:reflexive_AP_shilov}
Suppose that a Banach space $X$ has the Radon-Nikod\'ym property and $X^*$ has the approximation property. Then the Shilov boundary for $\widehat{\A_{wu} (B_X)}$ coincides with $\left\{ \delta_z : z \in \overline{\partial \A_{wu} (B_X) }^{w^*} \right\}$
\end{theorem}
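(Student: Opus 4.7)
The plan is to establish the two inclusions $\partial\widehat{\A_{wu}(B_X)}\subseteq\{\delta_z:z\in\overline{\partial\A_{wu}(B_X)}^{\,w^*}\}$ and $\{\delta_z:z\in\overline{\partial\A_{wu}(B_X)}^{\,w^*}\}\subseteq\partial\widehat{\A_{wu}(B_X)}$ separately, drawing on the machinery built in Sections~\ref{section_1} and~\ref{sec:wu}.

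\emph{Upper inclusion.} I would apply the obvious extension of Proposition~\ref{prop:boundary_norming} to $\A_{wu}(B_X)$, taking $\Gamma=\partial\A_{wu}(B_X)\subseteq\overline{B}_X$. Each $f\in\A_{wu}(B_X)$ is uniformly continuous and so extends continuously to $\overline{B}_X$, agreeing there with its Aron--Berner extension $\widetilde f$; therefore $\partial\A_{wu}(B_X)$ is simultaneously a boundary for $\A_{wu}(B_X)$ and for $\widetilde{\A_{wu}(B_X)}$. The norming chain preceding Proposition~\ref{prop:boundary_norming} then shows that $\{\delta_x:x\in\partial\A_{wu}(B_X)\}$ is a boundary for $\widehat{\A_{wu}(B_X)}$, so by minimality of the Shilov boundary,
\[
\partial\widehat{\A_{wu}(B_X)}\subseteq\overline{\{\delta_x:x\in\partial\A_{wu}(B_X)\}}^{\,\sigma}.
\]
Because $X^*$ has the approximation property, the argument of Lemma~\ref{lem:gamma_closure} carries over verbatim to any $\Gamma\subseteq\overline{B}_X$ and identifies this Gelfand closure with $\{\delta_z:z\in\overline{\partial\A_{wu}(B_X)}^{\,w^*}\}$.

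\emph{Lower inclusion.} By closedness of $\partial\widehat{\A_{wu}(B_X)}$ and another use of the extended Lemma~\ref{lem:gamma_closure}, it suffices to prove the pointwise statement $\{\delta_x:x\in\partial\A_{wu}(B_X)\}\subseteq\partial\widehat{\A_{wu}(B_X)}$. Fix $x_0\in\partial\A_{wu}(B_X)$. Under the RNP hypothesis, strong peak points for $\A_{wu}(B_X)$ are norm-dense in $\partial\A_{wu}(B_X)$, so one can pick strong peak points $y_n$ for $\A_{wu}(B_X)$ with $\|y_n-x_0\|\to 0$. Lemma~\ref{lem:reflexive_AP_strong_peak} then replaces each $y_n$ by a strong peak point $z_n\in S_X$ for $\A_a(B_X)$ with $\|z_n-y_n\|<1/n$, so that $z_n\to x_0$ in norm. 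The argument of Theorem~\ref{prop:strong_peak}(2) uses only $\A_a(B_X)\subseteq\A$ and the fact that $\A$ is a closed subalgebra of $\A_\infty(B_X)$, so it applies verbatim to $\A=\A_{wu}(B_X)$ and yields that each $\delta_{z_n}$ is a peak point for $\widehat{\A_{wu}(B_X)}$; in particular $\delta_{z_n}\in\partial\widehat{\A_{wu}(B_X)}$. Since norm convergence $z_n\to x_0$ forces Gelfand convergence $\delta_{z_n}\to\delta_{x_0}$, closedness of the Shilov boundary gives $\delta_{x_0}\in\partial\widehat{\A_{wu}(B_X)}$.

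\emph{Main obstacle.} The delicate step is the density claim that strong peak points for $\A_{wu}(B_X)$ are norm-dense in $\partial\A_{wu}(B_X)$ under RNP: it replaces the classical Bishop argument (which would need compact metrizability of the domain) and must be derived via a Bourgain--Stegall type variational principle combined with the polynomial approximation furnished by the AP of $X^*$. Once this density is available, Lemmas~\ref{lem:gamma_closure} and~\ref{lem:reflexive_AP_strong_peak} together with Theorem~\ref{prop:strong_peak} fit together cleanly to give the claimed equality.
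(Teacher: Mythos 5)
Your proposal is correct and follows essentially the same route as the paper's proof: the upper inclusion via the $\A_{wu}$-version of Proposition~\ref{prop:boundary_norming} together with Lemma~\ref{lem:gamma_closure}, and the lower inclusion via the density of strong peak points for $\A_{wu}(B_X)$ in $\partial\A_{wu}(B_X)$ (the paper obtains this by arguing as in \cite[Theorem 4.4]{CLS}), followed by Lemma~\ref{lem:reflexive_AP_strong_peak}, Theorem~\ref{prop:strong_peak}(2) applied to $\A_{wu}(B_X)$, and closedness of the Shilov boundary. Your merging of the paper's two steps into a single two-stage norm approximation is only a cosmetic reorganization of the same argument.
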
 

\begin{proof} We divide the proof in two steps.  

\noindent \emph{STEP I}. If $x_0 \in S_X$ is a strong peak point for $\A_{wu} (B_X)$, then $\delta_{x_0}$ belongs to $\partial \widehat{\A_{wu}}$.

If not, there exists an open neighborhood $U$ of $\delta_{x_0}$ such that $U \cap \partial \widehat{\A_{wu}} = \emptyset$. Without loss of generality, we write 
\[
U = \bigcap_{j=1}^{N} \{ \phi \in \M(\A_{wu} (B_X)) : |(\phi - \delta_{x_0})(f_j) | < \eps \}
\]
for some $f_1, \ldots, f_N \in \A_{wu} (B_X)$ and $\eps >0$. Take $r >0$ so small so that whenever $x \in B_X$ satisfies $\| x - x_0\| < r$, then $|f_j (x) - f_j (x_0)| < \eps$ for any $j=1,\ldots, N$. By Lemma \ref{lem:reflexive_AP_strong_peak}, there exists a strong peak point $x_1 \in S_X $ for $\A_a (B_X)$ such that $\|x_1 - x_0\| < r$. 
By observing that Theorem \ref{prop:strong_peak} can also be applied to the algebra $\A_{wu} (B_X)$, we can deduce that $\delta_{x_1}$ is a peak point for $\widehat{\A_{wu} (B_X)}$. Notice that $\delta_{x_1}$ belongs to the open set $U$; hence $\delta_{x_1} \in U \cap \partial \widehat{\A_{wu}}$; which is a contradiction. Thus, 
\[
\{ \delta_x : x \text{ is a strong peak point for } \A_{wu} (B_X)\}  \subseteq \partial \widehat{\A_{wu}}.
\]

\noindent \emph{STEP II}. $\{ \delta_z : z \in \partial \A_{wu} (B_X) \} \subseteq \partial \widehat{\A_{wu}}$. 

Arguing as in \cite[Theorem 4.4]{CLS}, we can see that the set of strong peak points for $\A_{wu} (B_X)$ is dense in $\partial \A_{wu} (B_X)$. Thus, for $z \in \partial \A_{wu} (B_X)$, there is a sequence $(x_n) \subset S_X$ of strong peak points for $\A_{wu} (B_X)$ such that $\| x_n - z\| \rightarrow 0$. As we have checked in STEP I, each $\delta_{x_n}$ is an element of $\partial \widehat{\A_{wu}}$. Thus, $\delta_z$ belongs to $\partial \widehat{\A_{wu}}$ since $(\delta_{x_n})$ converges in the Gelfand topology to $\delta_z$ in $\M(\A_{wu} (B_X))$ and $\partial \widehat{\A_{wu}}$ is closed. Therefore, the assertion follows.

Finally, arguing as in Proposition \ref{prop:boundary_norming}, we see that $\partial \widehat{\A_{wu}} \subseteq \overline{\{ \delta_x : x \in \partial \A_{wu} (B_X) \}}^{\,\sigma} = \left \{ \delta_x : x \in \overline{ \partial \A_{wu} (B_X) }^{w^*} \right \}$, where the second equality holds by Lemma \ref{lem:gamma_closure}. 
\end{proof}

\begin{cor}      
If a Banach space $X$ is one of the following:
\begin{enumerate}
\setlength\itemsep{0.4em}
\item[(a)] A uniformly convex space having the approximation property, 
\item[(b)] $\ell_1$,
\item[(c)] $d(w,1)$ with a decreasing sequence $w=(w_n)$ of positive real numbers satisfying $w_1 = 1$ and $w \in c_0 \setminus \ell_1$,
\end{enumerate} 
then $\partial \widehat{\A_{wu} (B_X)} = \M (\A_{wu} (B_X)) = \{ \delta_z : z \in \overline{B}_{X^{**}} \}$. 
\end{cor}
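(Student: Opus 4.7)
The plan is to combine Theorem~\ref{prop:reflexive_AP_shilov} with an $\A_{wu}$-analogue of Proposition~\ref{prop:AP_wu_spectrum} and a Goldstine density argument. First, I would check that each of the spaces in (a)--(c) satisfies the hypotheses of Theorem~\ref{prop:reflexive_AP_shilov}: (a) a uniformly convex space is reflexive (hence has the RNP), and for a reflexive space the AP transfers from $X$ to $X^*$; (b) $\ell_1$ has the RNP, and $\ell_1^* = \ell_\infty$ has the AP; (c) $d(w,1)$ is a separable dual Banach space (hence has the RNP), and its dual admits a natural Schauder basis (hence has the AP).

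Next, I would establish the identification $\M(\A_{wu}(B_X)) = \{\delta_z : z \in \overline{B}_{X^{**}}\}$ by adapting the argument of Proposition~\ref{prop:AP_wu_spectrum}. Given $\phi \in \M(\A_{wu}(B_X))$ and $f \in \A_{wu}(B_X)$, the dilation $f_r(x) = f(rx)$ has, by Lemma~\ref{lem:weak_unif_conti}, a Taylor expansion converging uniformly on $B_X$ with homogeneous components in $\Pol_{wu}(^N X)$; the AP of $X^*$ then allows each such component to be norm-approximated by a finite-type polynomial (as in Lemma~\ref{lem:reflexive_AP_strong_peak}); and on the resulting dense family of finite-type polynomials the homomorphism $\phi$ is determined by $\pi(\phi)$, forcing $\phi = \delta_{\pi(\phi)}$. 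The auxiliary hypothesis $\Pol(^N X) = \Pol_{wu}(^N X)$ needed in Proposition~\ref{prop:AP_wu_spectrum} (for $\A_u$) is not required here, since we already start inside $\A_{wu}$.

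It then remains to show $\{\delta_z : z \in \overline{B}_{X^{**}}\} \subseteq \partial \widehat{\A_{wu}(B_X)}$. In each case every $x \in S_X$ is a strong peak point for $\A_a(B_X) \subseteq \A_{wu}(B_X)$: for (a) uniform convexity makes every $x \in S_X$ strongly exposed, hence a strong peak point for $\A_a$; for (b) and (c) this is the Acosta--Louren\c{c}o observation already recalled in Section~\ref{section_1}. The argument of STEP~I in the proof of Theorem~\ref{prop:reflexive_AP_shilov} (which uses Theorem~\ref{prop:strong_peak} applied to $\A_{wu}$) then gives $\delta_x \in \rho \widehat{\A_{wu}} \subseteq \partial \widehat{\A_{wu}}$ for every such $x$. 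Since $\partial \widehat{\A_{wu}}$ is closed in $\M(\A_{wu}(B_X))$, and the canonical map $z \mapsto \delta_z$ is a homeomorphism from $(\overline{B}_{X^{**}}, w^*)$ onto $\M(\A_{wu}(B_X))$ by the previous step, the desired inclusion reduces to the weak-star density $\overline{S_X}^{\,w^*} = \overline{B}_{X^{**}}$. This in turn follows from the classical Hahn--Banach fact $\overline{S_X}^{\,w} = \overline{B}_X$ in any infinite-dimensional $X$, the coincidence of the $w^*$-topology of $X^{**}$ restricted to $X$ with the weak topology of $X$, and Goldstine's theorem $\overline{\overline{B}_X}^{\,w^*} = \overline{B}_{X^{**}}$.

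The main delicate point I foresee is the $\A_{wu}$-adaptation of Proposition~\ref{prop:AP_wu_spectrum} in the second step. Once that spectrum identification is secured, the Shilov-boundary inclusion combines peak-point arguments already present in the paper with the standard Goldstine density of $\overline{B}_X$ in $\overline{B}_{X^{**}}$, and the two-sided inclusion $\partial \widehat{\A_{wu}} \subseteq \M(\A_{wu}(B_X)) = \{\delta_z : z \in \overline{B}_{X^{**}}\} \subseteq \partial \widehat{\A_{wu}}$ then yields the corollary.
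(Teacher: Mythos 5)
Your argument is correct, and it reaches the stated equality by a route that is genuinely leaner than the paper's. The paper's proof is a two-line application of Theorem \ref{prop:reflexive_AP_shilov}: it recalls that for each of (a)--(c) the unit sphere $S_X$ is the Shilov boundary for $\A_{wu}(B_X)$, checks that $X$ has the Radon--Nikod\'ym property and $X^*$ the approximation property, and concludes that $\partial \widehat{\A_{wu}(B_X)} = \{\delta_z : z \in \overline{S_X}^{\,w^*}\} = \{\delta_z : z \in \overline{B}_{X^{**}}\} = \M(\A_{wu}(B_X))$. You instead avoid Theorem \ref{prop:reflexive_AP_shilov} almost entirely: since in all three cases every point of $S_X$ is already a strong peak point for $\A_a(B_X)$, Theorem \ref{prop:strong_peak}(2) (applied to $\A_{wu}$, as the paper itself licenses in STEP I of Theorem \ref{prop:reflexive_AP_shilov}) puts $\delta_x$ into $\rho\widehat{\A_{wu}}$ directly, and then closedness of $\partial\widehat{\A_{wu}}$, the spectrum identification $\M(\A_{wu}(B_X)) = \{\delta_z : z \in \overline{B}_{X^{**}}\}$ (which the paper also takes from Lemma \ref{lem:weak_unif_conti} and Proposition \ref{prop:AP_wu_spectrum}), and the weak-star density $\overline{S_X}^{\,w^*} = \overline{B}_{X^{**}}$ exhaust the whole spectrum. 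What this buys: you never need the reverse inclusion $\partial\widehat{\A_{wu}} \subseteq \{\delta_z : z \in \overline{\partial\A_{wu}(B_X)}^{\,w^*}\}$, you never need Lemma \ref{lem:reflexive_AP_strong_peak}, and in fact the Radon--Nikod\'ym property you verify in your first paragraph plays no role in your final argument --- only the approximation property of $X^*$ and the strong-peak-point facts are used. Both proofs (yours and the paper's) implicitly assume $\dim X = \infty$ when invoking $\overline{S_X}^{\,w^*} = \overline{B}_{X^{**}}$; you are the more explicit about this.

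One factual slip in your hypothesis check for case (c): the dual of $d(w,1)$ is the Marcinkiewicz space $d^*(w,1)$, which is non-separable because $d(w,1)$ contains an isomorphic copy of $\ell_1$ (spanned by a suitable block basis); a non-separable space admits no Schauder basis, so your parenthetical ``its dual admits a natural Schauder basis (hence has the AP)'' cannot be right as stated. The paper asserts the approximation property of $d(w,1)^*$ without proof as well, so this does not invalidate your argument relative to the paper's, but the justification needs to be replaced by an argument specific to the Marcinkiewicz dual rather than a basis argument.
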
 

\begin{proof}
We already observed in Section \ref{section_1} that if $X$ is one of the Banach spaces (a), (b) or (c), then the unit sphere $S_X$ is the Shilov boundary for $\A = \A_u (B_X)$ or $\A_\infty (B_X)$, thus it is also the Shilov boundary for $\A_{wu} (B_X)$. Moreover, $X$ has the Radon-Nikod\'ym property and $X^*$ has the approximation property. By Theorem \ref{prop:reflexive_AP_shilov}, it follows that the Shilov boundary $\partial \widehat{\A_{wu} (B_X)}$ coincides with $\left\{ \delta_z : z \in \overline{S_X}^{w^*} \right\}$ which is nothing but $\{ \delta_z : z \in \overline{B}_{X^{**}} \} = \M (\A_{wu} (B_X))$. 
\end{proof} 





\end{document}